\theoremstyle{plain}
\newtheorem{theorem}{Theorem}[section]
\newtheorem{lemma}[theorem]{Lemma}
\newtheorem{corollary}[theorem]{Corollary}
\newtheorem{definition}[theorem]{Definition}
\newtheorem{proposition}[theorem]{Proposition}
\newtheorem{example}[theorem]{Example}
\newtheorem{remark}[theorem]{Remark}
\newcommand{\Lip}[0]{\mathrm{Lip}}
\newcommand{\Lipb}[0]{\mathrm{Lip}_{\mathrm b}}
\newcommand{\pa}[0]{\partial}
\newcommand{\cpt}[0]{\mathrm{c}}
\newcommand{\jump}[1]{\ensuremath{[\![#1]\!]} }
\newcommand{\spt}[0]{\mathrm{spt}}
\newcommand{\D}[0]{\mathcal D}
\newcommand{\M}[0]{\mathbf M}
\newcommand{\N}[0]{\mathbf N}
\newcommand{\I}[0]{\mathbf{I}}
\newcommand{\cab}[0]{\mathsf{C}\mathrm{(}\mathsf{Ab}\mathrm{)}}
\newcommand{\A}[0]{\mathfrak A}
\newcommand{\U}[0]{\mathcal U}
\newcommand{\Ab}[0]{\mathsf{Ab}}
\newcommand{\IC}[0]{\mathrm{IC}}
\renewcommand{\H}[0]{\mathfrak H}
\newcommand{\C}[0]{\mathfrak C}
\newcommand{\met}[0]{\mathsf{Met}}
\newcommand{\sing}[0]{\mathrm{sing}}
\title
[Cosheaves]
{The coincidence of the homologies of integral currents and of integral singular chains, via cosheaves} 
\author{Ayato Mitsuishi}
\date{\today}
\keywords{Metric currents, Local Lipschitz contractibility, Cosheaves}
\begin{document}
\maketitle

\begin{abstract}
%%%Currents in metric spaces were introduced by Ambrosio and Kirchheim. 
%%%We introduce a notion of weak local Lipschitz contractibility for metric spaces. 
We consider the notion of metric spaces being locally Lipschitz contractible introduced by Yamaguchi, and a category of metric spaces satisfying this condition. 
%%%We introduce a notion of metric spaces being weakly locally Lipschitz contractible and consider a category of metric spaces satisfying this condition.
Many objects in metric geometry including CAT-spaces and Alexandrov spaces, belong to this category. 
%%%We consider a category consisting of metric spaces satisfying this condition, and prove a coincidence of these homologies on this category. 
We consider the homology of integral currents with compact support in a metric space, introduced by Ambrosio and Kirchheim, 
and prove that it and the usual integral singular homology are isomorphic on the category. 
%%%We prove a coincidence of these homologies on this category. 
%%%This category contains an object which does not have the homotopy type of CW-complices. 
The proof of it is based on the theory of cosheaves. 
A method to compare the homologies associated to cosheaves is also proved in this paper.
\end{abstract}

\section{Introduction}\label{sec:introduction}

There are homology theories of metric spaces depending on metric structures. 
For instance, we denote by $S_k^\Lip(X)$ the free abelian group based on the set of all Lipschitz maps from a $k$-simplex $\triangle^k$ with a standard Euclidean metric to a metric space $X$.
Then, it is a subgroup of the group of usual integral singular $k$-chains $S_k(X)$, and further, $S_\bullet^\Lip(X) = \bigoplus_{k=0}^\infty S_k^\Lip(X)$ becomes a subcomplex of $S_\bullet(X) = \bigoplus_{k=0}^\infty S_k(X)$.
By the definition, the homology $H_\ast^\Lip(X)$ of $S_\bullet^\Lip(X)$ depends on the metric structure of $X$. 
This group $H_\ast^\Lip(X)$ is called the {\it singular Lipschitz homology} of $X$. 
Here and hereafter, the symbol $\bullet$ means degrees of a chain complex 
and $\ast$ denotes a fixed degree (of a homology). %%% of some chain complex. 
Yamaguchi introduced the notion of metric spaces being locally Lipschitz contractible (abbreviated to LLC) and proved 
\begin{theorem}[\cite{Y}] \label{thm:Y}
%%%On the category of LLC metric spaces and locally Lipschitz maps, two functors $H_\ast^\Lip$ and $H_\ast$ to the category of abelian groups coincide via the natural transformation given by the inclusion $S_\bullet^\Lip \hookrightarrow S_\bullet$. %%% , for every $k \ge 0$.
For every LLC metric space $X$, the inclusion $S_\bullet^\Lip(X) \hookrightarrow S_\bullet(X)$ induces an isomorphsm $H_\ast^\Lip(X) \to H_\ast^\sing(X)$. %%% between thier homologies. 
\end{theorem}
Here, $H_\ast^\sing$ denotes the usual integral singular homology, that is the homology of $S_\bullet$. 
We can regard $S_\bullet^\Lip$ as a functor from the category of metric spaces and locally Lipschitz maps to the category of chain complices and chain maps, and the correspondence $S_\bullet^\Lip \to S_\bullet$ as a natural transformation between the functors. 

Ambrosio and Kirchheim introduced currents in general metric spaces (\cite{AK}). 
Currents are generalizations of operations of integrating smooth forms on submanifolds. 
We will recall the precise definition and fundamental properties of metric currents in Section \ref{sec:current}. 
A restricted class $\I_\bullet^\cpt(X)$ consisting of all integral currents with compact support in $X$ becomes a chain complex, due to \cite{AK}. 
Its homology is denoted by $H_\ast^\IC(X)$ in this paper. 
%%%As will review in \S...,
On the other hands, Riedweg and Sch\"appi introduced the notion of metric spaces admitting locally strong Lipschitz contractions (see Section \ref{sec:LLC}). 
%%%%This condition will be reviewed in Section \ref{sec:LLC}. 
They defined a natural transformation $[\,\cdot\,]$ from $S_\bullet^\Lip$ to $\I_\bullet^\cpt$ and claimed 
\begin{theorem}[\cite{RS}] \label{thm:RS}
On the category of metric spaces admitting locally strong Lipschitz contractions and locally Lipschitz maps, the natural transformations $S_\bullet \hookleftarrow S_\bullet^\Lip \xrightarrow{[\,\cdot\,]} \I_\bullet^\cpt$ induce isomorphisms between their homologies. 
\end{theorem}

%%%The LLC-condition and the property of metric spaces admitting locally strong Lipschitz contractions are similar, but a relation between them is not clear.
%%%Geodesic metric spaces having a local upper sectional curvature bound in a triangle-comparison sense, called CAT-spaces, are easily proved to be LLC and to admit locally strong Lipschitz contractions (see Section \ref{sec:cat}). 
%%%CAT-spaces which are geodesic metric spaces with upper curvature bounds in the triangle-comparison sense are easily checked to satisfy the both conditions.
%%%On the other hands, finite dimensional Alexandrov spaces which are, by definition, geodesic metric spaces with lower sectional curvature bounds in a synthetic sense, were proved to be LLC in a strong sense, by the author and Yamaguchi \cite{MY}. 
The assumptions in Theorems \ref{thm:Y} and \ref{thm:RS} are related. 
Indeed, we will prove that a metric space admitting locally strong Lipschitz contractions is LLC (Lemma \ref{lem:LSLC to LLC}). 
%%%The author and Yamaguchi proved that finite dimensional Alenxadrov spaces which are geodesic metric spaces with lower curvature bounds are LLC in a strong sense (\cite{MY}). 
%%%The author does not know whether arbitrary Alexandrov spaces admit locally strong Lipschitz contractions. 
%%%%In this paper, we introduce a new property of metric spaces, which will be called the weak local Lipschitz contractibility (abbreviated to WLLC). 
%%%%This property will be proved to be weaker than the both above conditions. 
%%%%Hence, we obtain a wider class of metric spaces than classes with which are dealt in both \cite{Y} and \cite{RS}.
%%%So, we have a wider class of metric spaces than with that were dealed in both \cite{Y} and \cite{RS}.
A main result of the present paper is 
\begin{theorem} \label{thm:main thm}
On the category of LLC metric spaces and locally Lipschitz maps, the natural transformations $H_\ast^\sing \leftarrow H_\ast^\Lip \xrightarrow{[\,\cdot\,]_\ast} H_\ast^\IC$ are isomorphisms. %%induce isomorphisms between their homologies. 
\end{theorem}

%%%以後, WLLC を LLC に置き換える..... 

As a direct corollary to Theorem \ref{thm:main thm}, we have
\begin{corollary} \label{cor:main cor}
The functors $H_\ast^\Lip$ and $H_\ast^\IC$ can be extended to functors on the category of all locally Lipschitz contractible metric spaces and all continuous maps, such that they are naturally isomorphic to $H_\ast^\sing$. 
%%%That is, for any continuous map $f : X \to Y$ between WLLC metric spaces $X$ and $Y$, there are morphisms $H_\ast^\Lip(f) : H_\ast^\Lip(X) \to H_\ast^\Lip(Y)$ and $H_\ast^\IC(f) : H_\ast^\IC(X) \to H_\ast^\IC(Y)$ such that $H_\ast^\Lip(g \circ f) = H_\ast^\Lip(g) \circ H_\ast^\Lip(f)$ and $H_\ast^\IC(g \circ f) = H_\ast^\IC(g) \circ H_\ast^\IC(f)$ hold for any continuous map $g : Y \to Z$ to a WLLC metric space $Z$. 
In particular, $H_\ast^\Lip$ and $H_\ast^\IC$ are homotopy invariants for LLC metric spaces. 
\end{corollary}

The proofs of Theorems \ref{thm:Y} and \ref{thm:RS} were done directly. 
We will give a versatile proof of Theorem \ref{thm:main thm} using the theory of cosheaves.
Here, cosheaves are categorically dual notion of sheaves, which were introduced by Bredon \cite{Br}. 
Indeed, in the course of the proof of Theorem \ref{thm:main thm}, we give a technique to compare homologies associated to cosheaves (Theorem \ref{thm:cosheaf}). 
Notice that Mongodi \cite{Mo} considered that a chain complex consisting of metric currents in a metric space $X$, and proved that its homology coincides with the usual singular homology if $X$ is locally Lipschitz contractible {\it CW-complex}. 
His proof was done by verifying that the homology of currents is actually a homology theory, that is, it satisfies the axioms of Eilenberg and Steenrod. 
Further, he used the uniqueness of homology theory to prove the result. 
We should remark that there exists an LLC metric space which does not have the homotopy type of CW-complices (Section \ref{sec:remarks}). 
So, our result can not be proved via the uniqueness of homology.

A way to compare the homologies using cosheaves was also discussed by De~Pauw \cite{DP}. 
There, he consider the chain complices of usual currents in a subset of Euclidean spaces. 
Our proof of Theorem \ref{thm:main thm} is similar to an argument in that paper.
However, our formulation as in Theorem \ref{thm:cosheaf} did not appear. 
We consider that such a formulation is important and is very useful. 

\subsection{Organization}
The present paper mainly consists of three parts dealing with: Lipschitz contractions, metric currents, and cosheaves. 
The first two parts are subjects in geometry (and analysis) and the third part is purely an algebraic-topological subject.
Our main theorem in the paper is Therorem \ref{thm:main thm}. 
However, we consider that the proof of it is very important. 
It is based on Theorem \ref{thm:cosheaf}. 
If the reader purely is interested in algebraic topology, the author recommend to firstly read Section \ref{sec:cosheaf}.

In Section \ref{sec:LLC}, we review and define the notion of local Lipschitz contractibility and its variants. %%%conditions similar to that property. %%%several notions similar to the LLC-condition, for metric spaces. 
%%%In particular, we introduce a new notion, called the weak local Lipschitz contractibility, and prove that it is weaker than any another condition.
We prove that the local Lipschitz contractibility is weaker than other conditions.
In Section \ref{sec:current}, we review the notion of metric currents and its fundamental theory introduced and investigated by Ambrosio and Kirchheim (\cite{AK}). 
Further, we give proofs of several remarkable properties which are needed to prove our main results. 
In Section \ref{sec:cosheaf}, we recall the notion of cosheaves and its fundamental properties. 
We prove an important Theorem \ref{thm:cosheaf} which is a general method to compare homologies associated to coshaves. 
We generalize the local Lipschitz contractibility in terms of local triviality of homology theories (Lemma \ref{lem:LLC to LT}). 
At the end of the section, we prove Theorem \ref{thm:main thm} using Theorem \ref{thm:cosheaf} and Lemma \ref{lem:LLC to LT}. 
Finally, in Section \ref{sec:remarks}, we give several remarks about our results. 
In particular, we provide an example of an locally Lipschitz contractible metric space which does not admit the homotopy types of CW-complices.

%%%To prove Theorem \ref{thm:main thm}, in Section \ref{sec:cosheaf}, we introduce some property, called $H$-locally triviality, for metric spaces, which is a generalization of the WLLC-condition, in terms of each homology theory $H$. 
%%%We prove that the coincidence of homologies of $S_\bullet$, $S_\bullet^\Lip$ and $\I_\bullet^\cpt$ on $H$-locally trivial metric spaces, for suitable homology $H$.
%%%Finally, we recall a theory of cosheaves, %%, which are dual notions of sheaves, 
%%%and provide some technique to prove a coincidence of homologies by using cosheaves. 

\section{Variants of local Lipschitz contractibility} \label{sec:LLC}
Let us fix terminologies. 
In this section, $X$ and $Y$ always denote metric spaces. 
For $L \ge 0$, a map $f : X \to Y$ is said to be $L$-Lipschitz if it satisfies 
\[
d(f(x),f(y)) \le L d(x,y)
\]
for all $x,y \in X$. 
We say that $f$ is Lipschitz if $f$ is $L$-Lipschitz for some $L \ge 0$. 
The Lipschitz constant of $f$ is the minimum of all $L$ such that $f$ is $L$-Lipschitz, and is denoted by $\Lip(f)$.

A map $f : X \to Y$ is said to be locally Lipschitz if for any $x \in X$, there exists an open set $U$ in $X$ containing $x$ such that the restriction $f |_U$ is Lipschitz.

A map $f : X \to Y$ is called a {\it bi-Lipschitz embedding} if it satisfies 
\[
L^{-1} d(x,x') \le d(f(x), f(x')) \le L d(x,x')
\]
for all $x,x' \in X$ where $L \ge 1$ is some number.
If a bi-Lipschitz map is bijective, then it is called a {\it bi-Lipschitz homeomorphism}.
A {\it locally bi-Lipschitz homeomorphism} is a homeomorphism such that it and its inverse are locally Lipschitz.

%%%For subsets $A \subset X$ and $B \subset Y$, a locally Lipschitz map $f$ from $(X,A)$ to $(Y,B)$ is a locally Lipschitz map $f : X \to Y$ with $f(A) \subset B$.

A homotopy $h : X \times [0,1] \to Y$ is called a {\it Lipschitz homotopy} if it is a Lipschitz map, i.e., there exists a constant $C \ge 0$ such that 
\begin{equation*} 
d(h(x,t), h(x',t')) \le C (d(x,x') + |t-t'|)
\end{equation*}
for every $x,x' \in X$ and $t,t' \in [0,1]$.
For a homotopy $h$, we write $h_t = h(\cdot,t)$ for each $t \in [0,1]$.
We will also consider {\it locally Lipschitz homotopies}, which are homotopies being locally Lipschitz. 

Let $U$ and $V$ be subsets of $X$ with $U \subset V$. 
A Lipschitz homotopy $h : U \times [0,1] \to V$ is called a {\it Lipschitz contraction} if there exists a point $x_0 \in V$ such that $h_0$ is the inclusion $U \to V$ and $h_1 \equiv x_0$ is a constant map.
In this case, we say that $U$ is {\it Lipschitz contractible to $x_0$ in} $V$ and that $h$ is a Lipschitz contraction from $U$ to $x_0$ in $V$.

\begin{definition}[\cite{Y}, {cf.\! \cite{MY}}] \label{def:LLC} \upshape
Let $X$ be a metric space. 
We say that $X$ is {\it locally Lipschitz contractible}, for short LLC, if for any $x \in X$ and any $r > 0$, there exists $r' \in (0,r]$ such that $U(x,r')$ is Lipschitz contractible to $x$ in $U(x,r)$.
This property is also called the LLC-condition. 
\end{definition}
Here, $U(z,s)$ always denotes the open metric ball centered at $z$ with radius $s$ in a metric space.
Notice that we use a version of the definition of LLC-condition reformulated in \cite{MY}.
Obviously, if a metric space $X$ is covered by open sets $X_i$ such that each open set $X_i$ is LLC, then $X$ is LLC. 

We introduce a notion which seems to be weaker than the LLC-condition. 
\begin{definition} \upshape \label{def:WLLC}
We say that a metric space $X$ is {\it weakly locally Lipschitz contractible}, for short WLLC, if for any $x \in X$ and any open set $U \subset X$ with $x \in U$, there exists an open set $V \subset X$ with $x \in V \subset U$ such that $V$ is Lipschitz contractible in $U$ to some point of $U$. 
\end{definition}

Obviously, every LLC space is WLLC. %%% and the following holds.
Further, we have 
\begin{lemma} \label{lem:WLLC to LLC} 
Let $U$ be a subset of a metric space $V$. 
Suppose that $U$ is Lipschitz contractible in $V$. 
Then, for any $x \in U$, $U$ is Lipschitz contractible to $x$ in $V$. 

In particular, every WLLC metric space is LLC. 
\end{lemma}
\begin{proof}
By the assumption, there is a Lipscihtz homotopy $h : U \times [0,1] \to V$ such that $h_0 = \mathrm{id}_U$ and $h_1$ is a constant map. 
The image of $h_1$ is denoted by $y \in V$. 
Further, let us take an arbitrary point $x \in U$. 
Then, we consider a map $k : U \times [0,1] \to V$ defiend by 
\[
k(z,t) = \left\{ 
\begin{aligned}
& h(z, 2 t) && \text{if } t \le 1/2, \\
& h(x, 2- 2 t) && \text{if } t \ge 1/2
\end{aligned}
\right.
\]
for $z \in U$ and $t \in [0,1]$. 
This is well-defined. 
Indeed, $h(z,1) =y= h(x,1)$.
Further, $k$ is Lipschitz. 
Actually, for any $z \in U$ and $s,t \in [0,1]$ with $s \le 1/2 \le t$, we have 
\begin{align*}
d(k(z,s), k(z,t)) &= d(h(z,2 s), h(x,2-2t)) \\
&\le d(h(z,2s),h(z,1)) + d(h(x,1), h(x,2-2t)) \\
&\le \Lip(h) (1-2s) + \Lip(h) (2 t -1) \\
&= 2 \Lip(h) (t-s). 
\end{align*}
For $s, t \in [0,1]$ with $s, t \le 1/2$ or with $s, t \ge 1/2$, we also have 
\[
d(k(z,s), k(z,t)) \le 2 \Lip(h) |s-t|.
\]
For any $z,w \in U$ and $s \in [0,1]$, we obtain 
\[
d(k(z,s),k(w,s)) \le \Lip(h) d(z,w). 
\]
By the definition, $k_1$ is a constant map of the value $x$. 
Therefore, $U$ is Lipschitz contractible to $x$ in $V$. 

Let us take a WLLC metric space $X$. 
Then, for any $x \in X$ and $r > 0$, there exist an $r'>0$ such that $U(x,r')$ is Lipschitz contractible in $U(x,r)$. 
By the former statement, $U(x,r')$ is also Lipschitz contractible to $x$ in $U(x,r)$. 
Hence, we know that $X$ is LLC. 
This completes the proof. 
\end{proof}

%%%The author do not know an example of WLLC metric space which is not LLC. 
\begin{proposition} \label{prop:LLC is preserving}
The LLC-condition is inherited to open subsets and is preserving under locally bi-Lipschitz homeomorphisms.
Namely, if $X$ is LLC and $U$ is an open subset of $X$, and if $f : X \to Y$ is a locally bi-Lipschitz homeomorphism, then $U$ and $Y$ are LLC.
\end{proposition}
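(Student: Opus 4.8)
The plan is to establish the two assertions — stability under passing to open subsets, and invariance under locally bi-Lipschitz homeomorphisms — in that order, treating in each case the WLLC version first (where the definition already refers to arbitrary open neighbourhoods and is manifestly local) and then deducing the LLC version by a short reduction concerning metric balls. Two elementary observations will be used repeatedly: \textbf{(O1)} if $U_X(x,\rho)\subset U$ for an open $U\subset X$, then for every $s\le\rho$ the ball of radius $s$ about $x$ computed in $U$ coincides with the one computed in $X$; and \textbf{(O2)} in the LLC condition it suffices to produce, for all sufficiently small $r>0$, a radius $r'\in(0,r]$ with $U(x,r')$ Lipschitz contractible to $x$ in $U(x,r)$, since enlarging the target ball keeps a Lipschitz contraction a Lipschitz contraction.

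\emph{Passing to open subsets.} For WLLC this is immediate: if $X$ is WLLC and $U\subset X$ is open, then for $x\in U$ and an open $W$ with $x\in W\subset U$ the set $W$ is open in $X$; applying the WLLC property of $X$ at $x$ to $W$ yields an open $V$ with $x\in V\subset W$ together with a Lipschitz contraction of $V$ in $W$ to some point, and since $W\subset U$ this data already witnesses WLLC of $U$ at $x$. For LLC, given $x\in U$ and $r>0$, choose $\rho>0$ with $U_X(x,\rho)\subset U$, set $r_0=\min\{r,\rho\}$, and apply the LLC property of $X$ to obtain $r'\in(0,r_0]$ with $U_X(x,r')$ Lipschitz contractible to $x$ in $U_X(x,r_0)$; by \textbf{(O1)} the balls of radii $r'$ and $r_0$ about $x$ coincide in $U$ and in $X$, and $U_U(x,r_0)\subset U_U(x,r)$, so the same contraction witnesses LLC of $U$ at $x$.

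\emph{Invariance under locally bi-Lipschitz homeomorphisms.} Let $f:X\to Y$ be a locally bi-Lipschitz homeomorphism with $X$ WLLC (resp.\ LLC). Intersecting, for each point, a neighbourhood on which $f$ is Lipschitz with the $f$-preimage of a neighbourhood of its image on which $f^{-1}$ is Lipschitz, one writes $Y=\bigcup f(W)$ where the $W$ are open in $X$, each $f(W)$ is open in $Y$, and each $f|_W:W\to f(W)$ is a bi-Lipschitz homeomorphism. By the first part each $W$ is WLLC (resp.\ LLC). It then remains to check (i) that a genuine bi-Lipschitz homeomorphism transports the property, and (ii) that WLLC and LLC are local properties, i.e.\ a metric space each of whose points has a WLLC (resp.\ LLC) open neighbourhood has that property itself. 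For (i), in the WLLC case one pushes a contraction in the source forward through the homeomorphism $g$: $g$ carries open sets to open sets, and $g\circ h\circ(g^{-1}\times\id)$ is a Lipschitz homotopy, being a composition of Lipschitz maps; in the LLC case, for $L$-bi-Lipschitz $g$ one uses the inclusions $g(U_A(a,r))\subset U_B(g(a),Lr)$ and $U_B(g(a),r/L)\subset g(U_A(a,r))$ to convert a Lipschitz contraction of $U_A(a,s)$ to $a$ in $U_A(a,r/L)$ into a Lipschitz contraction of $U_B(g(a),s/L)$ to $g(a)$ in $U_B(g(a),r)$, noting $s/L\le r$. For (ii), the WLLC case is the same neighbourhood-shrinking argument as above, and the LLC case again uses \textbf{(O1)} and \textbf{(O2)}, exactly as in the open-subset step applied to a WLLC (resp.\ LLC) neighbourhood $N$ of the given point.

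I do not expect a genuine obstacle — the statement is, as the author says, trivial. The only points that require a little attention, and that I would make explicit, are: that the metric balls in the LLC definition are taken relative to the subspace under consideration (handled by \textbf{(O1)} and \textbf{(O2)}); the bookkeeping of the bi-Lipschitz constant $L$ when transferring ball radii through $g$, so that the produced radius indeed lies in $(0,r]$; and the observation that LLC and WLLC are local properties, which is what makes the covering argument in the second assertion go through.
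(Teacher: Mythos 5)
Your proof is correct. The paper offers no argument at all here — the proposition is introduced with the single remark ``the following is trivial'' — so there is no ``paper proof'' to compare against. Your write-up simply makes the triviality explicit, and the points you single out for care are exactly the right ones: that the balls $U(x,r)$ in the LLC definition are intrinsic to the subspace under consideration (your (O1)), that enlarging the target ball in a Lipschitz contraction costs nothing (your (O2)), the bookkeeping of the constant $L$ so that the produced radius stays in $(0,r]$, and the reduction of the locally bi-Lipschitz case to the genuinely bi-Lipschitz one via the covering $Y=\bigcup f(W)$ together with the observation that both LLC and WLLC are local properties. All of these steps check out.
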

\begin{proof}
Let $X$ be an LLC metric spacea and $U$ its open set.
For any $x \in U$ and $r > 0$, we can take $r' > 0$ such that $U(x,r') = U \cap U(x,r') \subset U \cap U(x,r)$. 
Here, $U(z,s)$ denotes the open ball in the whole space $X$.
Since $X$ is LLC, there is an $r'' > 0$ such that $U(x,r'')$ is Lipschitz contractible in $U(x,r')$. 
Hence, $U$ is LLC. 

Let $f : X \to Y$ be a locally bi-Lipschitz homeomorphism. 
Hence, for any $x \in X$, there is an open neighborhood $U$ of $x$ such that $f|_{U} : U \to f(U)$ is a bi-Lipschitz homeomorphism.
%%%We give a constant $C > 0$ such that $\Lip(f |_U)$ and $\Lip(f^{-1} |_{f(U)})$ are bounded by $C$ from above. 
%%%We prove that $f(U)$ is LLC. 
By the former statement, $U$ is LLC. 
Therefore, we may assume that $f$ itself is bi-Lipschitz. 
Let $C \ge 1$ be a constant which bounds $\Lip(f)$ and $\Lip(f^{-1})$. 
We prove that $Y$ is LLC. 
For any $x \in X$ and $r > 0$, we have 
\[
f(U(x, C^{-1}r)) \subset U(f(x), r) \subset f(U(x,Cr)).
\]
%%%Indeed, for $y \in X$ with $d(f(x),f(y)) < C^{-1}r$, we have $d(x,y) \le \Lip(f^{-1}) d(f(x), f(y)) < r$. 
Since $X$ is LLC, there exist an $r' > 0$ and a Lipschitz contraction $h : U(x, Cr') \times [0,1] \to U(x, C^{-1}r)$ to $x$. 
We consider a map $k : f(U(x,Cr')) \times [0,1] \to U(f(x), r)$ defined by 
\[
k(f(y),t) = f(h(y,t))
\]
for $y \in U(x,Cr')$ and $t \in [0,1]$. 
By the construction, $k$ gives a Lipschitz contraction from $U(f(x),r')$ to $f(x)$ in $U(f(x), r)$. 
This completes the proof. 
\end{proof}

In \cite{MY}, the author and Yamaguchi defined a notion stronger than the LLC-condition as follows.

\begin{definition}[\cite{MY}] \label{def:SLLC} \upshape
A metric space $X$ is said to be {\it strongly locally Lipschitz contractible}, for short SLLC, if for any $x \in X$, there exist $r > 0$ and a Lipschitz contraction $h : U(x,r) \times [0,1] \to U(x,r)$ to $x$ such that $d(x,h(y,t))$ is monotone nonincreasing in $t$ for every $y \in U(x,r)$.
Such an $h$ is called a strong Lipschitz contraction. 
%%%In particular, for any $r' \in (0,r]$, $h$ is a Lipschitz contraction from $U(x,r')$ to $x$ in $U(x,r')$.
\end{definition}

It is clear that any SLLC space is LLC.
Obviously, every Banach manifold is SLLC. 

\subsection{Strong Lipscihtz contractions in the sense of \cite{RS}}
The terms of strong Lipschitz contractions were used in two papers. 
One of them was in \cite{MY} as in Definition \ref{def:SLLC} and another one was introduced by Riedweg and Sch\"appi in \cite{RS} as in the following definition. 
%%%Riedweg and Sch\"appi introduced the following notion. 
\begin{definition}[\cite{RS}] \label{def:RS} \upshape
A metric space $X$ {\it admits locally strong Lipschitz contractions} if for any $x \in X$, there exist $r > 0$ and $\gamma > 0$ such that every subset $S \subset U(x,r)$ admits a Lipschitz contraction $\varphi : S \times [0,1] \to X$ to some point in $X$ whose Lipschitz constant is controlled as 
\begin{equation} \label{eq:RS}
d(\varphi(z,t),\varphi(z',t')) \le \gamma \mathrm{diam} (S) |t-t'| + \gamma d(z,z')
\end{equation}
for all $z,z' \in S$ and $t,t' \in [0,1]$. 
\end{definition}
In \cite{RS}, a map $\varphi$ satisfying \eqref{eq:RS} was called a strong Lipschitz contraction of $S$. 

\begin{lemma} \label{lem:LSLC to LLC}
Any metric space admitting locally strong Lipschitz contractions is (weakly) locally Lipschitz contractible. 
\end{lemma}
\begin{proof}
Let a metric space $X$ admit locally strong Lipschitz contractions. %%% in the sense of Definition \ref{def:RS}. 
For any $x \in X$, there are $r > 0$ and $\gamma > 0$ such that any subset $S$ of $U(x,r)$ admits $\varphi_S$ satisfying \eqref{eq:RS}. 
Let us take $r'' < r' \le r$. 
Then, the Lipschitz contraction $\varphi = \varphi_{U(x,r'')} : U(x,r'') \times [0,1] \to X$ satisfies %%%to some point in $X$ such that 
\[
d(\varphi(z,t), \varphi(z',t')) \le 2 \gamma r'' |t-t'| + \gamma d(z,z')
\]
for all $z,z' \in U(x,r'')$ and $t,t' \in [0,1]$.
In particular, $\varphi(z,t)$ is contained in $U(x, 3 \gamma r'')$ for any $z \in U(x,r'')$ and $t \in [0,1]$. 
Hence, if $3 \gamma r'' \le r'$, then $\varphi$ is a Lipschitz contraction of $U(x,r'')$ in $U(x,r')$. %%%$U(x,r'')$ has a Lipschitz contraction into $U(x,r')$. 
Therefore, we know that $X$ is WLLC. 
By Lemma \ref{lem:WLLC to LLC}, $X$ is LLC.
This completes the proof. 
\end{proof}
%%%The author does not know any relation between the LLC-condition and the condition of metric spaces admitting locally strong Lipschitz contractions. 

\subsection{Examples} \label{subsec:example}
Many objects in metric geometry, related to restrictions of sectional curvature, are known to be SLLC. 
Such spaces are called CAT-spaces which are metric spaces of curvature locally bounded from above and Alexandrov spaces which have a local curvature bound from below. %%% opposite to CAT-spaces. %%% with a local upper (or lower) curvature bound. %%%, or called Alexandrov spaces. 
Further, Ohta introduced generalizations of CAT-spaces %%%Alexandrov spaces with an upper curvature bound 
in the view point of convexities of distance functions. %%%, and recall its definition following \cite{O}. %%%%generalizations of such spaces in the view point of convexities of distance functions are also proved SLLC later. 
Let us recall the definitions of them, briefly. 
%%%For the difinition of lower curvature bound, we refer \cite{BBI}, \cite{BGP}. 

A metric space $X$ is geodesic if any two points $p, q$ in $X$ admit a curve $c : [0,1] \to X$ such that $c(0)=p$ and $c(1) = q$ and that $d(c(t),c(t'))=|t-t'| d(p,q)$. 
Such a curve $c$ is called a geodesic segment. 
%%%A point $c(1/2)$ is called a midpoint of $p$ and $q$. 
%%%Ohta introduced the following notion. 
\begin{definition}[\cite{O}] \upshape
Let $k \in (0,2]$. 
An open set $U$ of a geodesic space $X$ is called a $C_k$-{\it domain} for $k$ if for any three points $x,y,z \in U$, and any geodesic segment $c : [0,1] \to X$ between $c(0)=y$ and $c(1)=z$, we have 
\begin{equation} \label{eq:ck-domain}
d(x,c(t))^2 \le (1-t) d(x,y)^2 +t d(x,z)^2 - \frac{k}{2} t (1-t) d(y,z)^2.
\end{equation}

Let $L_1, L_2 \ge 0$. 
An open set $U$ in a geodesic space $X$ is called a $C_L$-{\it domain} for $(L_1,L_2)$ if for any three points $x,y,z \in U$, any geodesic segments $c, \bar c : [0,1] \to X$ with $c(0)=\bar c(0)=x$, $c(1)=y$, and $\bar c(1)=z$, and for all $t \in [0,1]$, we have 
\begin{equation} \label{eq:cl-domain}
d(c(t),\bar c(t)) \le \left(1+L_1 \frac{\min\{d(x,y)+d(x,z), 2 L_2\}}{2}\right) t d(y,z)
\end{equation}
\end{definition}

%%%An open set $U$ in a geodesic space is said to be {\it CAT(0)} or to have {\it nonpositive curvature} if for any three points $p, q, r$ in $U$ and for any geodesic $c$ joining $q = c(0)$ and $r=c(1)$, we have 
%%%\begin{equation} \label{eq:cat}
%%%d(p,c(t))^2 \le (1-t)d(p,q)^2 + t d(p,r)^2 - t (1-t) d(q,r)^2.
%%%\end{equation}
A $C_2$-domain is usually called a CAT(0)-domain. 
If $U$ satisfies the opposite inequality of \eqref{eq:ck-domain} for $k=2$, we say that $U$ has {\it nonnegative curvature} (in the sense of Alexandrov). 
%%%Note that these terminologies are not usual, which are used only in this section. 
In a complete Riemannian manifold, a CAT(0)-domain (or a nonnegatively curved domain) actually has the nonpositive (or nonnegative, respectively) sectional curvature. %%% bounded from above (or below, respectively) by $0$.
%%%these conditions characterize sectional curvature bounds from above or below by $0$. 
In the same way, there are definitions of synthetic sectional curvature bound from above (and from below) by a real number $\kappa$, for metric spaces, in terms of geodesic triangle comparison. 
See details \cite{BBI}. 
Ohta's results (Corollaries 2.4 and 3.2, Lemma 2.9 and Proposition 3.1 in \cite{O}) and thier proofs implies 
%%%The following is trival (well-known). 
%%%However, we give the proof for the reader's convenience.
%%%we prove it for readers non-exparts of metric geometry. 
\begin{proposition} \label{lem:cat}
%%%A CAT(0) open set $U$ of a geodesic space is SLLC. 
Both a $C_k$-domain and a $C_L$-domain in a geodesic metric space are SLLC, where $k \in (0,2]$ and $L_1, L_2 \ge 0$ are arbitrary. 
In particular, a CAT-space is SLLC. %%%an Alexansdrov space with a local upper curvature bound is SLLC. 
\end{proposition}
Let us give a proof only for CAT(0)-domains, for convenience. 
\begin{proof}
Let $U$ be a CAT(0)-domain in a geodesic space and $x \in U$. 
For $R > 0$, any geodesic joining two points in the ball $U(x,R)$ is contained in $U(x,2 R)$. 
Taking $R$ with $U(x,2 R) \subset U$, we define a map
\[
h : U(x, R) \times [0,1] \to U(x,R)
\]
by $h(y,t) = c_y(t)$. 
Here, $c_y : [0,1] \to X$ is a geodesic with $c_y(0)=y$ and $c_y(1) = x$. 
Using the condition \eqref{eq:ck-domain} for $k=2$ twice, we have 
\begin{equation*} %%%\label{eq:npc}
d(h(y,t), h(z,t))^2 \le (1-t)^2 d(y,z)^2. 
\end{equation*}
Therefore, we obtain 
\[
d(h(y,t), h(z,s)) \le d(y,z) + R |t-s|.
\]
Hence, $h$ is a Lipschitz contraction to $x$.
Further, by the definition, it is a strong Lipschitz contraction in the sense of Definition \ref{def:SLLC}.
This completes the proof.
\end{proof}
Further, in \cite{RS}, %%CAT($\kappa$)-spaces, which are metric spaces having sectional curvature bounded from above by $\kappa$, in a synthtic sense, 
%%%Alexandrov spaces with a local upper curvature bound 
CAT-spaces are proved %%%... 
to admit locally strong Lipschitz contractions. 
%%%We know that, by the proof of Lemma \ref{lem:cat}, metric spaces of Busemann nonpositive curvature are also SLLC, 
%%%because such spaces satisfy the inqualities \eqref{eq:npc} from the definition. 

A complete geodesic space with curvature locally bounded from below, in a synthetic sense, is called an Alexandrov space. 
Any complete Riemannian manifold and the Gromov-Hausdorff limit of manifolds having a uniform lower sectional curvature bound are Alexandrov spaces. %%% with a (local) lower curvature bound. 
See details \cite{BBI}, \cite{BGP}. 
A main result in \cite{MY} states that every finite dimensional Alexandrov space is SLLC. %%% with a local lower curvature bound is SLLC.
The proof of this fact is not trivial. 
For instance, we can observe that the same proof of Lemma \ref{lem:cat} does not work for a geodesic space of nonnegative curvature.
To prove the result in \cite{MY}, we used the theory of gradient flow of distance functions founded by Perelman and Petrunin \cite{PP}, \cite{Pet}. 

The author do not know whether arbitrary (finite dimensional) Alexandrov space admits locally strong Lipschitz contractions. 

\subsection{Lipschitz extensions}

We recall McShane-Whitney's Lipschitz extension theorem. 
\begin{theorem}[\cite{Mc}, \cite{Wh}] \label{thm:extension}
Let $X$ be a metric space and $A$ a subset of $X$. 
Let $f : A \to \mathbb R$ be an $L$-Lipschitz function. 
Then, the following functions 
\begin{align*} 
X \ni x &\mapsto \inf_{a \in A}\, (f(a) + L d(x,a)) \text{ and } \label{eq:extension}\\
X \ni x &\mapsto \sup_{a \in A}\, (f(a) - L d(x,a)) 
\end{align*}
are $L$-Lipschitz on $X$ and extensions of $f$.
\end{theorem}
We will often use this theorem in the present paper.
The set of all real-valued Lipschitz functions on a metric space $Y$ is denoted by $\Lip(Y)$.
Using the above Lipschitz extension theorem, we have 

\begin{lemma} \label{lem:extension}
Let $A$ be a compact set in a metric space $X$. 
If a sequence $f_j \in \Lip(A)$ converges to $f \in \Lip(A)$ as $j \to \infty$ pointwise on $A$ with $\sup_j \Lip(f_j) < \infty$, then there are Lipschitz extensions $\tilde f_j$ of $f_j$ to $X$ which converges to some Lipschitz extension $\tilde f$ of $f$ pointwise on $X$ as $j \to \infty$. \end{lemma}
\begin{proof}
Let $f_j$ converge to $f$ in $\Lip(A)$ as $j \to \infty$ pointwise on $A$, with $L := \sup_j \Lip(f_j) < \infty$. 
Theorem \ref{thm:extension} ensures that the functions $\tilde f_j$ and $\tilde f$ defined as 
\[
\tilde f_j(x) = \min_{a \in A} f_j(a) + L d(a,x) \text{ and } 
\tilde f(x) = \min_{a \in A} f(a) + L d(a,x)
\]
are Lipschitz extensions of $f_j$ and $f$ to $X$.
Let us fix $x \in X \setminus A$. 
For each $j$, we take $y_j, y \in A$ such that 
\[
\tilde f_j(x) = f_j(y_j) + L d(y_j,x) \text{ and }
\tilde f(x) = f(y) + L d(y,x).
\]
Then, we have 
\[
\tilde f_j(x) \le f_j(y) + L d(y,x)
\]
for every $j$. 
Hence, we obtain $\varlimsup_{j \to \infty} \tilde f_j(x) \le \tilde f(x)$. 
Further, a subsequence $\{y_{k(j)}\}_j$ of $\{y_j\}_j$ may converge to some $y_\infty \in A$ as $j \to \infty$. 
Then, we have 
\[
\tilde f(x) \le f(y_\infty) + L d(y_\infty,x) = \lim_{j \to \infty} f_j(y_{k(j)}) + L d(y_{k(j)},x).
\]
Therefore, $\varliminf_{j \to \infty} \tilde f_j(x) \ge \tilde f(x)$.
This completes the proof.
\end{proof}

Let $\Lipb(X)$ denotes the set of all real-valued bounded Lipschitz functions on $X$. 
\begin{lemma} \label{lem:coflabby}
Let $A$ be a subset of a metric space $X$. 
Then, the maps $\Lip(X) \to \Lip(A)$ and $\Lipb(X) \to \Lipb(A)$ assigning the function restricted to $A$ are surjective. 
\end{lemma}
\begin{proof}
By Theorem \ref{thm:extension}, the restriction map $\Lip(X) \to \Lip(A)$ is surjective. 
Let $f \in \Lipb(A)$, we have a Lipschitz extension $\tilde f$ of $f$ to $X$ by using Theorem \ref{thm:extension}. 
Let $C > 0$ satisfy that $-C \le f(a) \le C$ for all $a \in A$. 
Then, the function $\min\{C, \max\{-C, \tilde f\}\}$ is bounded Lipschitz on $X$ and an extension of $f$. 
Hence, the map $\Lipb(X) \to \Lipb(A)$ is surjective. 
\end{proof}

\subsection{Locally-Lipshitz homotopy invariance of the singular Lipschitz homology}

For a metric space $X$, its singular Lipscihtz chain complex $S_\bullet^\Lip(X)$ was defiend in the introduction. 
As for the usual singular chain complex $S_\bullet(X)$, the complex $S_\bullet^\Lip(X)$ has a canonical augmentation defiend as 
\begin{equation} \label{eq:std aug} 
\varepsilon : S_0^\Lip(X) \ni \sum_{i=1}^N a_i x_i \mapsto \sum_{i=1}^\N a_i \in \mathbb Z,
\end{equation}
where $x_i \in X$ and $a_i \in \mathbb Z$. 
For the extended chain complex 
\[
\cdots \to S_k^\Lip(X) \to S_{k-1}^\Lip(X) \to \cdots \to S_0^\Lip(X) \xrightarrow{\varepsilon} \mathbb Z, 
\]
its homology is denoted by $\tilde H_\ast^\Lip(X)$, and is called the reduced singular Lipschitz homology of $X$. 

%%The following is almost trivial. 
%%However, we give a proof for completeness. 
\begin{lemma} \label{lem:homotopy L} 
Let $h : X \times [0,1] \to Y$ be a locally Lipschitz homotopy. 
Then the induced homomorphisms between the (reduced) singular Lipschitz homologies coincide as follows. 
\begin{align*}
h_0{}_\ast = h_1{}_\ast &: H_k^\Lip(X) \to H_k^\Lip(Y), \\
h_0{}_\ast = h_1{}_\ast &: \tilde H_0^\Lip(X) \to \tilde H_0^\Lip(Y). 
\end{align*}
\end{lemma}
\begin{proof}
Recall that the continuous homotopy $h$ satisfies $h_0{}_\ast = h_1{}_\ast$ as a map between the usual (reduced) singular homologies. 
The proof of this fact is done by giving a chain homotopy equivalence between chain maps $h_0{}_\#$ and $h_1{}_\#$ from $S_\bullet(X)$ to $S_\bullet(Y)$. 
This chain homotopy is constructed by a decomposition of the prism $\triangle^k \times [0,1]$ into simplices. 
Such a prism decomposition is given by a combinatorial or a piecewisely linear way. 
Therefore, the standard chain homotopy gives a chain homotopy equivalence between $h_0{}_\#$ and $h_1{}_\#$ as chain maps from $S_\bullet^\Lip(X)$ to $S_\bullet^\Lip(Y)$. 
%%%$h_0{}_\#$ and $h_1{}_\#$ as a map from $S_\bullet^\Lip(X)$ to $S_\bullet^\Lip(Y)$ are chain homotopy equivqlent,
Hence, the maps $h_0{}_\ast$ and $h_1{}_\ast$ from $H_k^\Lip(X)$ to $H_k^\Lip(Y)$ are the same. 
The reduced version is proved by a similar way. 
\end{proof}

\section{Chain complices consisting of metric currents} \label{sec:current}

In this section, we denote by $X$ and $Y$ metric spaces. 
We recall the notion of currents in metric spaces introduced by Ambrosio and Kirchheim \cite{AK}. 
Here, we note that we will use a slightly modified definition from the original one. 
%%% Lemmas \ref{lem:D is flabby}, \ref{lem:N^c spt} and \ref{lem:I^c spt} are fundamental and important in our paper.
%%% Because the proofs of Lemmas \ref{lem:N^c spt} and \ref{lem:I^c spt} could not be found in any literatures, we will give the proof.
%%% Lemma \ref{lem:D is flabby} is maybe well-known, but for the completeness, we will prove it.
For the reason why we use the modified definition, see Remark \ref{rem:tight}.

\subsection{Basics of measure theory}
Let us denote by $\mu$ a Borel measure on a metric space $X$.
The {\it support} $\spt(\mu)$ of $\mu$ is defined by
\[
\spt(\mu) = \{x \in X \mid \mu(U(x,r)) > 0 \text{ for any } r > 0\}
\]
which is a closed subset of $X$. 
The measure $\mu$ is said to be finite if $\mu(X) < \infty$. 
The outer measure obtained from $\mu$ is denoted by the same symbol as $\mu$. 

We say that $\mu$ is {\it concentrated} on a subset $A$ of $X$ if $\mu(X \setminus A) = 0$.
It is %%%easy to check 
known that if $\mu$ is concentrated on a separable set, then $\mu$ is concentrated on its support.
If $\mu$ is finite and is concentrated on a separable set, then its support is separable.

We say that $\mu$ is {\it tight} if for any $\varepsilon > 0$, there exists a compact subset $K \subset X$ such that $\mu(X \setminus K) < \varepsilon$. 
If $\mu$ is finite, then $\mu$ is tight if and only if $\mu$ is concentrated on a $\sigma$-compact set. 
In this case, $\mu$ is concentrated on its support. %%%%!!!!
Further, $\Lipb(X)$ is dense in $L^1(X,\mu)$ if $\mu$ is a finite tight Borel measure on $X$.

Let $\mathcal M$ be a family of finite Borel measures on $X$. 
The infimum $\bigwedge_{\nu \in \mathcal M} \nu$ of $\mathcal M$ is given by 
\[
\bigwedge_{\nu \in \mathcal M} \nu (B) := 
\inf \left\{\sum_{j=1}^\infty \mu_j (B_j) \right\}
\]
for all Borel sets $B \subset X$, 
where the infimum runs over %%%... 
among %%%...  
all countable family $\{\mu_j\}_{j=1}^\infty \subset \mathcal M$ and all Borel partitions $\{B_j\}$ of $B$. 
Here, a Borel partition $\{B_j\}$ of $B$ is a disjoint countable family consisting of Borel sets satisfying $\bigcup_j B_j = B$.
By the definition, $\bigwedge_{\nu \in \mathcal M} \nu(B) \le \nu'(B)$ for any $\nu' \in \mathcal M$ and Borel set $B \subset X$.
In particular, $\mathcal M$ is a finite Borel measure. 
Further, if some $\nu' \in \mathcal M$ is tight, then $\bigwedge_{\nu \in \mathcal M} \nu$ is tight.

For another metric space $Y$ with a measurable map $f : X \to Y$, we denote by $f_\# \mu$ the push-forward measure of $\mu$ by $f$ which is defined by $f_\# \mu(B) = \mu(f^{-1}(B))$ for all Borel sets $B \subset Y$.

\subsection{Metric currents} 
From now on, $k$ denotes a nonnegative integer.
%%%We recall the notion of metric currents following \cite{AK}.

We set $\D^0(X) = \Lip_{\mathrm b}(X)$ and $\D^k(X) = \Lip_{\mathrm b}(X) \times [\Lip(X)]^k$ for $k \ge 1$.
We will often abbreviate an element $(f,\pi_1,\cdots,\pi_k) \in \D^k(X)$ by $(f,\pi)$.

\begin{definition}[\cite{AK}] \upshape \label{def:current}
A multilinear functional $T : \D^k(X) \to \mathbb R$ is called a $k$-{\it current} in $X$ if it satisfies the following three conditions. 
\begin{itemize}
\item[(1)] $T$ is {\it continuous} in the following sense. 
Let $f \in \Lipb(X)$ and $\pi_i^j \in \Lip(X)$ where $i \in \{1, \dots, k\}$ and $j \in \mathbb N$ with $\sup_{i,j} \Lip(\pi_i^j) < \infty$. 
If $\pi_i^j$ converges to some function $\pi_i$ as $j \to \infty$ pointwise on $X$ for each $i$, then we have $\lim_{j \to \infty} T(f,\pi^j) \to T(f,\pi)$, where $\pi^j = (\pi_1^j, \dots, \pi_k^j)$ and $\pi = (\pi_1, \dots, \pi_k)$.
\item[(2)] $T$ satisfies the {\it locality} as follows. 
For $(f,\pi) \in \D^k(X)$, if $\pi_i$ is constant on $\{f \neq 0\}$ for some $i$, then $T(f,\pi) = 0$. 
\item[(3)] $T$ has {\it finite mass} in the following sense. 
There is a finite tight Borel measure $\mu$ on $X$ such that 
\[
|T(f,\pi)| \le \prod_{i=1}^k \Lip(\pi_i) \int_X |f| \, d \mu
\]
holds, for all $(f,\pi) \in \D^k(X)$. 
Here, when $k=0$, the value $\prod_{i=1}^k \Lip(\pi_i)$ is regarded as $1$. 
\end{itemize}
The minimal measure of $\mu$'s satisfying the finite mass axiom (3) as above for $T$ is called the {\it mass measure} of $T$ and is denoted by $\|T\|$.
We say that a current $T$ has compact support if $\|T\|$ has compact support, or equivalently, $\|T\|$ is concentrated on a compact set. 
The set of all $k$-currents in $X$ is denoted by $\M_k(X)$ and its subset consisting of currents having compact support is denoted by $\M_k^\cpt(X)$. 
\end{definition}

Typical and essential examples of currents are as follows.
\begin{example} \upshape \label{ex}
For an $L^1$-function $\theta$ on $\mathbb R^k$ in the Lebesgue measure $\mathcal L^k$, a $k$-current $\jump{\theta} \in \M_k(\mathbb R^k)$ is defined as 
\[
\jump{\theta} (f, \pi) = \int_{\mathbb R^k} \theta f \det \left(\frac{\pa \pi_i}{\pa x_j}\right) \, d \mathcal L^k(x)
\]
for $(f,\pi) \in \D^k(\mathbb R^k)$. 
Here, $\pa \pi_i / \pa x_j$ are defined for almost everywhere $\mathbb R^k$ and are bounded integrable functions, due to Rademacher's theorem. 
\end{example}

There is a useful characterization of the mass measures of currents as follows. 
\begin{proposition}[Proposition 2.7 in \cite{AK}] \label{prop:mass}
Let $T$ be a metric $k$-current in $X$. 
%The mass of $T$ coincides with 
%\[
%\sup \sum_{j=1}^\infty T(\chi_{B_j}, \pi^j), 
%\]
%where a family $\{B_j\}$ runs over all Borel partitions of $X$ and $\pi^j = (\pi_i^j)$ runs over all Lipschitz maps $\pi^j : X \to \mathbb R^k$ with $\Lip(\pi_i^j) \le 1$ for all $i$. 
For every Borel set $B$ in $X$, we have 
\[
\|T\|(B) = \sup \sum_{j=1}^\infty T(\chi_{B_j}, \pi^j),
\]
where the supremum runs among all Borel partitions $\{B_j\}$ of $B$ and all Lipschitz maps $\pi^j = (\pi^j_i)_{1 \le i \le k}: X \to \mathbb R^k$ with $\Lip(\pi_i^j) \le 1$ for all $i$. 
\end{proposition}

%%%The mass measures of currents satisfies the following properties. 
For $k$-currents $T, S$ in $X$ and a Borel set $B$ in $X$, we have 
\begin{align*}
&\|T+S\|(B) \le \|T\|(B)+\|S\|(B), \\
&\|-T\|(B) = \|T\|(B).
\end{align*}
These properties follow from the definition. 
The second property induces that $\spt(-T)=\spt(T)$. 

We recall fundamental operations to obtain currents. 
From now on, let $T \in \M_k(X)$. 
For another metric space with a Lipschitz map $\phi : X \to Y$, we have the {\it push-forward} $\phi_\# T$ of $T$ by $\phi$ defined by 
\[
\phi_\# T (f, \pi) = T(f \circ \phi, \pi \circ \phi)
\]
for $(f,\pi) \in \D^k(Y)$, which is a $k$-current in $Y$. 
Further, we have $\|\phi_\# T\| \le \Lip(\phi)^k \phi_\# \|T\|$ as measures on $Y$. 
If $\phi$ is a bi-Lipschitz embedding, then by Lemma \ref{lem:coflabby}, $\phi_\# : \M_k(X) \to \M_k(Y)$ is injective. 
Since $\Lipb(X)$ is dense in $L^1(X,\|T\|)$, the $k$-current $T$ can be extended to a multilinear functional on $L^1(X,\|T\|) \times [\Lip(X)]^k$ in a unique way. 
Therefore, for any Borel set $A \subset X$, the functional $T \lfloor A$ is defined by 
\[
T \lfloor A(f,\pi) = T(\chi_A f, \pi)
\]
for $(f,\pi) \in \D^k(X)$, where $\chi_A$ denotes the characteristic function of $A$, which is called the {\it restriction} of $T$ to $A$ and is also a $k$-current in $X$.
%%%Further, we have $\|T \lfloor B\| = \|T\| \lfloor B$ as measures on $X$. 
\begin{lemma} \label{lem:spt}
Let $T$ be a current in $X$ and $A$ a Borel set in $X$.
The mass measure of $T \lfloor A$ coincides with the restricted measure $\|T\| \lfloor A$ to $A$ defined as 
\[
\|T\| \lfloor A (B) := \|T\|(A \cap B)
\]
for every Borel set $B \subset X$. 
In particular, the support of $T \lfloor A$ is contained in $\spt(T) \cap \bar A$, where $\bar A$ is the closure of $A$ in $X$.
\end{lemma}
\begin{proof}
We take a metric current $T$ in $X$ of degree $k$ and a Borel set $A$. 
By Proposition \ref{prop:mass}, 
\[
\|T \lfloor A\| (B) = \sup \sum_{j=1}^\infty T (\chi_A \chi_{B_j}, \pi^j)
\]
holds, where the supremum runs among all Borel partitions $\{B_j\}$ of $B$ and all $k$-tuples $\pi^j = (\pi_i^j)_{1 \le i \le k}$ of $1$-Lipschitz functions $\pi_i^j : X \to \mathbb R$.
Since $\chi_{A \cap B_j} = \chi_A \chi_{B_j}$ and a Borel partition $\{B_j\}$ of $B$ gives a Borel partition $\{B_j \cap A\}$ of $A \cap B$, 
the value $\|T\|(A \cap B)$ actually coincides with $\|T \lfloor A\|(B)$, by Proposition \ref{prop:mass} again. 
%%%the same value in the above formula $\|T \lfloor A\|(B)$, by Proposition \ref{prop:mass} again. 
Therefore, we have $\|T \lfloor A\| = \|T\| \lfloor A$. 
Hence, we obtain $\spt(T \lfloor A) = \spt (\|T\| \lfloor A) \subset \spt \|T\| \cap \bar A = \spt\, T \cap \bar A$. 
This completes the proof. 
\end{proof}

It is clear that $T \lfloor \spt(T) = T$ as currents in $X$. 
The restriction is a current in the whole space $X$ in general. 
However, if $T$ has compact support, $T$ itself can be regard as a current in its support as follows. 

\begin{lemma} \label{lem:cpt spt}
Let $T \in \M_k^\cpt(X)$. 
Then, there is a unique $k$-current $T'$ in $\spt(T)$ such that $T = \iota_\# T'$, where $\iota : \spt(T) \to X$ is the inclusion.
\end{lemma}
\begin{proof}
Let us take $T \in \M_k^\cpt(X)$ and set $A = \spt(T)$. 
A functional $T' : \D^k(A) \to \mathbb R$ is defined as follows. 
For $(f,\pi) \in \D^k(A)$, by Lemma \ref{lem:coflabby}, we have an extension $(\tilde f, \tilde \pi) \in \D^k(X)$ of $(f,\pi)$. 
Then, we set 
\[
T'(f,\pi) = T(\tilde f, \tilde \pi).
\] 
First, we check that this value is independent on the choice of $(\tilde f, \tilde \pi)$ and show that $T'$ is multilinear and satisfies the locality. 
Let $\hat f \in \Lipb(X)$ be another extension of $f$. 
By the finite mass axiom, we have $T(\tilde f, \tilde \pi) = T(\hat f, \tilde \pi)$. 
Further, if $\hat f$ is a bounded Borel function on $X$ with $\hat f |_A = f$, then $T(\hat f, \tilde \pi) = T(\tilde f, \tilde \pi)$, because $T$ can be regarded as a functional on $L^1(X,\|T\|) \times [\Lip(X)]^k$. 
This implies the linearity of $T'(f, \pi)$ in $f$. 
Here, recall that $T$ as the functional on $L^1(X,\|T\|) \times [\Lip(X)]^k$ satisfies the strengthened locality, continuity and finite mass axiom, as stated in \cite{AK}. 
For another Lipschitz extension $\hat \pi \in [\Lip(X)]^k$ of $\pi$, the strengthened locality of $T$ implies $T(\tilde f, \tilde \pi) = T(\tilde f, \hat \pi)$. 
Further, the strengthened locality of $T$ implies the multilinearity of $T'(f,\pi)$ in $\pi$ and the locality of $T'$. 

Let us define a finite Borel measure $\mu$ on $A$ by $\mu(B) = \|T\|(B)$ for any Borel set $B \subset A$. 
The tightness of $\|T\|$ ensures that $\mu$ is tight. 
For any $(f, \pi) \in \D^k(A)$, we have 
\[
|T'(f,\pi)| = |T(\tilde f, \tilde \pi)| \le \prod_{i=1}^k \Lip(\tilde \pi_i) \int_X |\tilde f| \, d \|T\| = \prod_{i=1}^k \Lip(\pi_i) \int_X |f| \, d \mu. 
\]
Here, we note that a Lipschitz extension $\tilde \pi_i$ of $\pi_i$ can be chosen as $\Lip(\tilde \pi_i) = \Lip(\pi_i)$. 
Hence, $T'$ has finite mass. 

Finally, we prove that $T'$ is continuous. 
Let $\pi_i^j \in \Lip(A)$, $i =1, \dots,k$, $j \in \mathbb N$ with $\sup_{i,j} \Lip(\pi_i^j) \le L < \infty$ such that $\pi_i^j$ converges to $\pi_i$ as $j \to \infty$ pointwise on $A$, for every $i$.  
By Lemma \ref{lem:extension}, we have extensions $\tilde \pi_i^j$ of $\pi_i^j$ to $X$ with $\sup_{i,j} \Lip(\tilde \pi_i^j) \le L$, such that $\tilde \pi_i^j$ converges to some Lipschitz extension $\tilde \pi_i$ of $\pi_i$ pointwise on $X$ as $j \to \infty$, for each $i$. 
By the continuity of $T$, we have 
\[
\lim_{j \to \infty} T'(f, \pi^j) = \lim_{j \to \infty} T(\tilde f, \tilde \pi^j) = T(\tilde f, \tilde \pi) = T'(f,\pi).
\]
Hence $T$ is continuous. 
Therefore, $T' \in \M_k^\cpt(A)$. 
By the definition, $\iota_\# T' = T$. 
Since $\iota_\#$ is injective, such a $T'$ is unique. 
This completes the proof.
\end{proof}

%%%\begin{corollary}
As a corollary to Lemma \ref{lem:cpt spt}, 
the push-forward operator $\phi_\# : \M_k^\cpt(X) \to \M_k^\cpt(Y)$ is also defined, for a locally Lipschitz map $\phi : X \to Y$, as follows.
For $T \in \M_k^\cpt(X)$, taking $T'$ as in Lemma \ref{lem:cpt spt} for $T$, and set $\phi_\# T := \iota_\# (\phi |_{\spt(T)})_\# T' \in \M_k^\cpt(Y)$, where $\iota : \phi(\spt(T)) \to Y$ is the inclusion. 
By the construction, the pushforwards have the following functorial property: 
\[
(\psi \circ \phi)_\# = \psi_\# \phi_\# : \M_k^\cpt(X) \to \M_k^\cpt(Z)
\]
for locally Lipschitz mappings $\phi : X \to Y$ and $\psi : Y \to Z$. 
%%%\end{corollary}

The {\it boundary} $\pa T$ of $T$ is a functional on $\D^{k-1}(X)$ defined by 
\[
\pa T(f,\pi) = T(1, f, \pi).
\]
It satisfies the continuity and locality, but does not have finite mass in general. 
It is trivial that 
\begin{equation} \label{eq:comm}
\partial \phi_\# = \phi_\# \partial
\end{equation}
holds on $\M_k^\cpt(X)$, for a locally Lipschitz map $\phi : X \to Y$. 
By the locality, we have 
\begin{equation} \label{eq:dd=0}
\pa \pa T = 0.
\end{equation} 

\begin{definition}[\cite{AK}] \label{def:normal} \upshape 
A $k$-current $T$ in $X$ is {\it normal} if its boundary has finite mass, or equivalently, $\pa T \in \M_{k-1}(X)$. 
Here, when $k = 0$, we always regard $T$ as normal and $\pa T = 0$.
We denote the set of all normal $k$-currents in $X$ by $\N_k(X)$ and set $\N_k^\cpt(X) = \N_k(X) \cap \M_k^\cpt(X)$.
\end{definition}

By the definition, \eqref{eq:dd=0} and \eqref{eq:comm}, 
%%%of the normality and the locality, 
the group $\N_\bullet(X) = \bigoplus_{k=0}^\infty \N_k(X)$ becomes a chain complex with the boundary map $\partial$ and $\phi_\# : \N_\bullet(X) \to \N_\bullet(Y)$ is a chain map for any Lipschitz map $\phi : X \to Y$. 
Further, by Slicing Theorem 5.6 in \cite{AK}, if $T \in \N_k(X)$, then we have 
\begin{equation} \label{eq:boundary}
\spt(\partial T) \subset \spt(T).
\end{equation}
Hence, $\N_\bullet^\cpt(X) = \bigoplus_{k=0}^\infty \N_k^\cpt(X)$ is a subcomplex of $\N_\bullet(X)$.
Further, by \eqref{eq:comm}, $\phi_\# : \N_\bullet^\cpt(X) \to \N_\bullet^\cpt(Y)$ is well-defined as a chian map, for any {\it locally} Lipschitz map $\phi : X \to Y$. 

\begin{lemma} \label{lem:N cpt spt}
Let $T \in \N_k^\cpt(X)$. 
Then, there is a unique $T' \in \N_k^\cpt(\mathrm{spt}\, T)$ such that $\iota_\# T' = T$, where $\iota : \mathrm{spt}\, T \to X$ is the inclusion. 
\end{lemma}
\begin{proof}
Let us take $T \in \N_k^\cpt(X)$. 
By Lemma \ref{lem:cpt spt}, there is a unique $T' \in \M_k^\cpt(K)$ such that $\iota_\# T' = T$, where $K = \mathrm{spt}\, T$.
On the other hands, since $\pa T \in \M_{k-1}^\cpt(X)$, by using Lemma \ref{lem:cpt spt} again, we have $S \in \M_{k-1}^\cpt(K)$ such that $\iota_\# S = \pa T$.
Then, we obtain 
\[
\iota_\# S = \partial T = \pa \iota_\# T' = \iota_\# \partial T'.
\]
Since $\iota_\#$ is injective from Lemma \ref{lem:coflabby}, we conclude that $\pa T' = S$ and hence $\|\pa T'\|=\|S\|$.
Therefore, $T' \in \N_k^\cpt(K)$. 
This completes the proof. 	
\end{proof}

\subsection{Integral currents}
For a metric space $X$, we denote by $\mathcal H^k = \mathcal H_X^k$ the $k$-dimensional Hausdorff measure on $X$. 
A subset $S$ of $X$ is called a {\it countably $\mathcal H^k$-rectifiable set} if there are countably many Borel subsets $B_j$ of $\mathbb R^k$ and Lipschitz maps $\phi_j : B_j \to X$ such that 
\[
\mathcal H^k \left(S \setminus \bigcup_{j=1}^\infty \phi_j(B_j) \right) = 0.
\]

\begin{definition}[\cite{AK}] \upshape
A $k$-current $T$ in $X$ is said to be {\it rectifiable} if $\|T\|$ is concentrated on a countably $\mathcal H^k$-rectifiable set and is absolutely continuous in $\mathcal H^k$. 
When $k=0$, a rectifiable $0$-current $T$ is represented as 
\[
T(f) = \sum_{j=1}^\infty \theta_j f(x_j)
\]
for $f \in \Lipb(X)$, where $\theta_j \in \mathbb R$ and $x_j \in X$. 
Such a $T$ is written as 
\begin{equation} \label{eq:0-rect}
T = \sum_{j=1}^\infty \theta_j [x_j].
\end{equation} 

When $k \ge 1$, a $k$-current $T$ in $X$ is called an {\it integer rectifiable} current if it is rectifiable and for any open set $O \subset X$ and a Lipschitz function $\phi$, there is an integral valued $L^1$-function $\theta$ on $\mathbb R^k$ such that 
\[
\phi_\# (T \lfloor O) = \jump{\theta}
\]
as currents in $\mathbb R^k$. 
An {\it integer rectifiable $0$-current} $T$ in $X$ is defined as \eqref{eq:0-rect} such that $\theta_j \in \mathbb Z$ for all $j$.
\end{definition}
For integral rectifiable $0$-current $T$ represented as \eqref{eq:0-rect}, the families $\{x_j\}$ and $\{\theta_j\}$ can be finite sets. 
%%%In particular, we have $\I_0(X) = \I_0^\cpt(X)$. 

\begin{definition}[\cite{AK}] \upshape 
A $k$-current in $X$ is said to be {\it integral} if it is integer rectifiable and normal. 
We denote by $\I_k(X)$ the set of all integral $k$-currents in $X$ and set $\I_k^\cpt(X) = \I_k(X) \cap \M_k^\cpt(X)$. 
\end{definition}

The Boundary Rectifiability Theorem 8.6 in \cite{AK} says that $\I_\bullet(X) = \bigoplus_{k=0}^\infty \I_k(X)$ becomes a subcomplex of $\N_\bullet(X)$. 
The group $\I_\bullet^\cpt(X) = \bigoplus_{k=0}^\infty \I_k^\cpt(X)$ is also a subcomplex of $\I_\bullet(X)$.
As mentioned in the introduction, we will compare the homology of $\I_\bullet^\cpt(X)$, denote by $H_\ast^\IC(X)$, with the singular (Lipschitz) homology of $X$.
Further, we can check that the chain map $\phi_\# : \I_\bullet^\cpt(X) \to \I_\bullet^\cpt(Y)$ is well-defined, for a locally Lipschitz map $\phi : X \to Y$. 
Therefore, $\I_\bullet^\cpt$ can be regarded as a covariant functor from the category of all metric spaces and all locally Lipschitz maps to the category of all chain groups and all chain maps. 

\begin{lemma}\label{lem:I cpt spt}
For any $T \in \I_k^\cpt(X)$, there is a unique $T' \in \I_k^\cpt(\mathrm{spt}\, T)$ such that $\iota_\# T' = T$, where $\iota : \mathrm{spt}\, T \to X$ is the inclusion.
\end{lemma}
\begin{proof}
Let $T \in \I_k^\cpt(X)$. 
By Lemma \ref{lem:N cpt spt}, there is a unique $T' \in \N_k^\cpt(K)$ such that $\iota_\# T' = T$, where $K = \mathrm{spt}\, T$.
From the definition, there is a countably $\mathcal H^k$-rectifiable set $S \subset X$ such that $\|T\|(X \setminus S) = 0$. 
Then, we note that $S \cap K$ is also a countably $\mathcal H^k$-rectifiable set in $K$. 
Hence, we have $\|T'\|(K \setminus S)=\|T\|(X \setminus S)=0$. 
For a Borel set $B \subset K$ with $\mathcal H^k(B)=0$, we have $\|T'\|(B) = \|T\|(B)=0$. 
Therefore, $T'$ is a rectifiable current. 
We prove that $T'$ is integer rectifiable. 
For any open set $O$ in $K$, there is an open set $U$ in $X$ such that $O = K \cap U$. 
Let us take a Lipschitz map $\phi : K \to \mathbb R^k$. 
By Lemma \ref{lem:extension}, there is an extension $\psi : X \to \mathbb R^k$ such that $\psi$ is Lipschitz.
Since $T$ is integer rectifiable, there is an integrable function $\theta : \mathbb R^k \to \mathbb Z$ such that 
\[
\psi_\#(T \lfloor U) = \jump{\theta}.
\]
Now, we note that $T \lfloor U = (\iota_\# T') \lfloor U = \iota_\# (T' \lfloor O)$. 
Therefore, we obtain 
\[
\jump{\theta} = \psi_\# \iota_\# (T' \lfloor O)= \phi_\# (T' \lfloor O).
\]
This implies that $T'$ is integer rectifiable. 
This completes the proof. 
\end{proof}

\subsection{Locally Lipschitz homotopy invariance of the homology of currents}
%%%As written in the introduction, let us  the homology of the complex $\I_\bullet^\cpt(X)$. %%% of integral currents with compact support. 
We check that the homology $H_\ast^\IC(X)$ is independent on locally Lipschitz homotopy. 
Further, we have 
\begin{lemma} \label{lem:homotopy IC}
Let $h : X \times [0,1] \to Y$ be a locally Lipschitz homotopy. 
Then, we have $H_\ast(\N_\bullet^\cpt(h_0)) = H_\ast(\N_\bullet^\cpt(h_1))$ as morphisms from $H_\ast(\N_\bullet^\cpt(X))$ to $H_\ast(\N_\bullet^\cpt(Y))$. 
Further, we have $H_\ast^\IC(h_0) = H_\ast^\IC(h_1)$ as morphisms from $H_\ast^\IC(X)$ to $H_\ast^\IC(Y)$.
\end{lemma}

To prove Lemma \ref{lem:homotopy IC}, let us recall a product of currents considered in \cite{AK} and \cite{We}. 
Let $T$ be a $k$-current in a metric space $X$. 
Then, we define a functional $T \times [0,1] : \D^{k+1}(X \times [0,1]) \to \mathbb R$ by 
\[
(T \times [0,1]) (f, \pi) = \sum_{i=1}^{k+1} (-1)^i \int_0^1 T \left(f_t \frac{\partial \pi_i}{\partial t}, \hat \pi_i{}_t \right) dt
\]
for $(f,\pi) \in \D^{k+1}(X \times [0,1])$, %%%%where $\frac{\partial \pi_i(x,t)}{\partial t}$ is defined $\mathcal L^1$-a.e.\!\!\! $t$ and $\|T\|$-a.e.\! $x$, 
where $\hat \pi_i{}_t = (\pi_j{}_t)_{j \neq i}$. 
Here, the partial derivative $\partial \pi_i(x,t) / \partial t$ is defined for $\mathcal L^1$-a.e.\! $t$ and $\|T\|$-a.e.\! $x$, by Rademacher's Theorem and Fubini's Theorem. 
The functional $T \times [0,1]$ does not satisfy the continuity in general. %%%is not a current in general. 
Ambrosio and Kirchheim, and Wenger proved 
\begin{proposition}[\cite{We}, \cite{AK}] \label{prop:product}
Let $T \in \N_k(X)$ with bounded support. 
Then, $T \times [0,1] \in \N_{k+1}(X \times [0,1])$ with boundary 
\[
\partial (T \times [0,1]) = T \times [0] - T \times [1] - (\partial T) \times [0,1]. 
\]
Here, $(T \times [t])(f, \pi) = T(f_t, \pi_t)$ for $(f,\pi) \in \D^k(X \times [0,1])$. 

In addition, if $T \in \I_k(X)$, then $T \times [0,1] \in \I_{k+1}(X \times [0,1])$. 
\end{proposition}

%%%As a corollary to Proposition \ref{prop:product}, 
By the construction, if $T$ has compact support, then $T \times [0,1]$ has compact support.
Indeed, we have, for $(f,\pi) \in \D^{k+1}(X \times [0,1])$ with $\Lip(\pi_i) \le 1$,  
\[
|(T \times [0,1])(f,\pi)| \le (k+1) \int_0^1 \int_X |f_t| \, d \|T\|(x) \, d t. 
\]
Hence, the support of $T \times [0,1]$ is contained in $\spt(T) \times [0,1]$. 
\begin{proof}[Proof of Lemma \ref{lem:homotopy IC}]
Let us denote by $\mathbf C_k$ one of $\N_k^\cpt$ or $\I_k^\cpt$.
Let $h : X \times [0,1] \to Y$ be a locally Lipschitz homotopy. 
Let us define a map 
\[
P : \mathbf C_k(X) \to \mathbf C_{k+1}(Y)
\]
by 
\[
P(T) = h_\# (T \times [0,1]). 
\] 
Then, by Proposition \ref{prop:product}, we have 
\[
\partial P(T) + P(\partial T) = h_\# (T \times [0] - T \times [1]) = h_0{}_\# T - h_1{}_\# T.
\]
Hence, the map $P$ is a chain homotopy between $h_0{}_\#$ and $h_1{}_\#$. 
Therefore, the induced maps between homologies are the same. 
This completes the proof. 
\end{proof}

\subsection{Mayer-Vietoris type property}

\begin{lemma}[Localization Lemma 5.3 in \cite{AK}] \label{lem:localization}
Let $T$ be a normal $k$-current in a metric space $X$ and $f : X \to \mathbb R$ a Lipschitz function.
For almost all $s \in \mathbb R$, the restriction $T \lfloor \{f \le s \}$ of $T$ to the sublevel set of $f$ is a normal current in $X$. 
Further, if $T$ is integral, then so is $T \lfloor \{f \le s\}$ for a.e.\! $s \in \mathbb R$. 
\end{lemma}
As a corollary to Lemma \ref{lem:localization}, %%% and \ref{lem:spt}, 
we know that $T \lfloor \{f > s\} = T -  T \lfloor \{f \le s\}$ is also a normal current, for a.e. $s \in \mathbb R$. 
Further, if $T$ has a compact support, then $T \lfloor \{f \le s\}$ and $T \lfloor \{f > s\}$ have compact support. 

\begin{lemma} \label{lem:a}
Let $U$ and $V$ be open subsets in a metric space $X$.
Then, there is an exact sequence as follows. 
\[
0 \to \N_\bullet^\cpt(U \cap V) \xrightarrow{\psi} \N_\bullet^\cpt(U) \oplus \N_\bullet^\cpt(V) \xrightarrow{\varepsilon} \N_\bullet^\cpt(U \cup V) \to 0, 
\]
where $\psi$ and $\varepsilon$ are defined as 
\[
\psi(T) = (i_\# T, -i'_\#T), \hspace{10pt} \varepsilon(S,S') = j_\# S+j_\#' S'. 
\]
Here, $i : U \cap V \to U$, $i' : U \cap V \to V$, $j : U \to U \cup V$, $j' : V \to U \cup V$ are the inclusions.

Further, if we replace $\N_\bullet^\cpt$ with $\I_\bullet^\cpt$, then we obtain an exact sequence 
\[
0 \to \I_\bullet^\cpt(U \cap V) \xrightarrow{\psi} \I_\bullet^\cpt(U) \oplus \I_\bullet^\cpt(V) \xrightarrow{\varepsilon} \I_\bullet^\cpt(U \cup V) \to 0.
\]
\end{lemma}
\begin{proof}
Let us denote by $\mathbf C_k$ one of $\N_k^\cpt$ and $\I_k^\cpt$. 
Let us take open sets $U$ and $V$ in $X$ and consider the following sequence 
\[
0 \to \mathbf C_k(U \cap V) \xrightarrow{\psi} \mathbf C_k(U) \oplus \mathbf C_k(V) \xrightarrow{\varepsilon} \mathbf C_k(U \cup V) \to 0. 
\]
Since $i_\#$ and $i'_\#$ are injective, $\psi$ is injective. 
By the definition, we have $\varepsilon \psi = 0$. 
We prove that $\varepsilon$ is surjective. 
Let us take $T \in \mathbf C_k(U \cup V)$. 
We denote by $d$ the distance function from $X \setminus V$. 
Since $\spt(T)$ is compact and is contained in $U \cup V$, there is an $r > 0$ such that 
\[
\spt(T) \cap \{d \le r\} \subset U.
\]
By Lemma \ref{lem:localization}, we can take $r$ such that $T \lfloor \{d \le r\}$ is in $\mathbf C_k$. 
Then, $S := T \lfloor \{d \le r\}$ is regarded as a current in $\mathbf C_k(U)$ and $S' := T \lfloor \{d > r\}$ can be regarded as a current in $\mathbf C_k(V)$, by Lemmas \ref{lem:spt} and \ref{lem:cpt spt}. 
By the construction, we have $T = j_\# S+j_\#' S'$. 
Hence, the map $\varepsilon$ is surjective. 
Next, we take an element $(S,S') \in \mathbf C_k(U) \oplus \mathbf C_k(V)$ with $j_\# S+ j_\#' S' = 0$. 
Since $\spt(-S) = \spt(-j_\# S) = \spt(j_\# S) = \spt(S)$, we know that $\spt(S) = \spt(S')$ and it is contained in $U \cap V$. 
By Lemma \ref{lem:cpt spt}, the current $S$ can be regarded as a current in $U \cap V$, say $T$. 
Then, we have $\psi(T) = (i_\#T,-i_\#' T) = (S,S')$. 
This completes the proof.  
\end{proof}

\subsection{A natural transformation $[\,\cdot\,]$ from $S_\bullet^\Lip$ to $\I_\bullet^\cpt$}
In the introduction, we already define the complex $S_\bullet^\Lip(X)$ of singular Lipschitz chains in a metric space $X$, which is a subcomplex of the usual integral singular chain complex $S_\bullet(X)$. 
Following \cite{RS}, we define a chain map $[\,\cdot\,] : S_\bullet^\Lip(X) \to \I_\bullet^\cpt(X)$. 
For each singular Lipschitz simplex $\sigma : \triangle^k \to X$, which is just a Lipschitz map, a $k$-current $[\sigma]$ in $X$ is defined by  
\[
[\sigma] = \sigma_\# \jump{1_{\triangle^k}}.
\]
By the definition, we have 
\[
\spt([\sigma]) \subset \mathrm{im}(\sigma). 
\]
Its $\mathbb Z$-linear extension gives a group homomorphism $[\,\cdot\,] : S_k^\Lip(X) \to \I_k^\cpt(X)$. 
Then, we have 
\[
\spt([c]) \subset \mathrm{im}(c)
\]
for every Lipschitz chain $c$. 
Here, for a singular chain $c = \sum_{\sigma} a_\sigma \sigma$, its image is defined by 
\[
\mathrm{im}(c) = \bigcup_{a_\sigma \neq 0} \mathrm{im}(\sigma). 
\]
We note that, Stokes's theorem for Lipschitz functions on $\triangle^k$ holds as the following form. 
\[
\int_{\triangle^k} \det \frac{\pa (f_1,\dots,f_k)}{\pa (s_1,\dots,s_k)} \, d \mathcal L^k(s) 
= \int_{\pa \triangle^k} f_1 \det \frac{\pa (f_2, \dots, f_k)}{\pa (t_1, \dots, t_{k-1})}\, d \mathcal L^{k-1}(t)
\]
for $f_1, \dots, f_k \in \Lip(\triangle^k)$, where $\int_{\pa \triangle^k}$ is the sum of integrations over $(k-1)$-faces of $\triangle^k$ with orientations, and $t$ is an intrinsic coordinate of each face. 
This formula is actually proved by a standard smoothing argument, and is represented as 
\[
(\pa \jump{1_{\triangle^k}})(f_1, \dots, f_k) = \sum_{i=0}^k (\iota_i {}_\# \jump{1_{\triangle^{k-1}}})(f_1, \dots, f_k)
\] 
where $\iota_i : \triangle^{k-1} \to \triangle^k$ is an orientation preserving isometric embedding into a face of $\triangle^k$.
From this formulation, the map $[\,\cdot\,] : S_\bullet^\Lip(X) \to \I_\bullet^\cpt(X)$ is known to be a chain map. 
Further, it is natural in the sense that $[\phi_\# c] = \phi_\# [c]$ for a locally Lipschitz map $\phi : X \to Y$ to another metric space $Y$ and $c \in S_\bullet^\Lip(X)$.

\subsection{The groups of $0$-chains} 
Let $X$ be a metric space. 
Obviously, the groups $S_0^\Lip(X)$ and $S_0(X)$ are the same. 
For the group $\I_0^\cpt(X)$ of integral $0$-currents, the following two lemmas hold	. 
%%%The following Lemmas \ref{lem:0-chains} and \ref{lem:augmentation} are trivial, however, they are important. 
\begin{lemma} \label{lem:0-chains}
The map $[\,\cdot\,] : S_0^\Lip(X) \to \I_0^\cpt(X)$ is an isomorphism. 
\end{lemma}
\begin{proof}
Let us take $c = \sum_{i=1}^N a_i x_i \in S_0^\Lip(X) = S_0^\Lip(X)$. 
Then, $[c] = \sum_{i=1}^N a_i [x_i] \in \I_0^\cpt(X)$ from the definition. 
Hence, the map $[\,\cdot\,] : S_0^\Lip(X) \to \I_0^\cpt(X)$ is surjective. 
We prove that it is injective. 
We assume $[c]=0$, that is, $\sum_{i=1}^N a_i f(x_i) = 0$ for every bounded Lipschitz map $f : X \to \mathbb R$. 
Fix an index $i$, we can take a function $f \in \Lipb(X)$ such that $f(x_i)=1$ and $f(x_j) = 0$ for all $j \neq i$. 
Hence, $a_i = 0$ for all $i$. 
It implies $c=0$. 
This completes the proof. 
\end{proof}

Let us consider the map $\varepsilon : \N_0^\cpt(X) \to \mathbb R$ defiend by 
\[
\varepsilon T = T(1) 
\]
for all $T \in \N_0^\cpt(X)$.
The restriction of it to the group $\I_0^\cpt(X)$ is also represented as the same symbol $\varepsilon : \I_0^\cpt(X) \to \mathbb Z$, where we note that the target group can be $\mathbb Z$. 
We have $\partial \varepsilon S = 0$ for all $S \in \N	_1^\cpt(X)$, by the locality axiom of currents. 
Hence, the both $\varepsilon$'s are augmentations of the complices $\N_\bullet^\cpt(X)$ and $\I_\bullet^\cpt(X)$.
%%%On the other hands, the canonical augmentation $\varepsilon : S_0(X) \to \mathbb Z$ 

\begin{lemma}\label{lem:augmentation}
Let $\varepsilon : S_0^\Lip(X) \to \mathbb Z$ be the standard augmentation defiend in \eqref{eq:std aug} which is denoted by the same symbol as the map $\varepsilon : \I_0^\cpt(X) \to \mathbb Z$.
Then, we have $\varepsilon \circ [\,\cdot\,] = \varepsilon$. 
\end{lemma}
\begin{proof}
The lemma follows directly from the definitions. 
%%%Indeed, for $c = \sum_{i=1}^N a_i x_i \in S_0^\Lip(X)$, we have $\varepsilon [c] = \sum_{i=1}^N a_i = \varepsilon (c)$. 
\end{proof}

\begin{lemma} \label{lem:dim axiom}
Let $X_0$ be a metric space consisting of a single point.
Then, we have $H_0^\IC(X_0) \cong \mathbb Z$ and $H_k^\IC(X_0) = 0$ for $k \ge 1$. 
In addition, $H_0(\N_\bullet^\cpt(X_0)) \cong \mathbb R$ and $H_k(\N_\bullet^\cpt(X_0)) = 0$ for $k \ge 1$. 
\end{lemma}
\begin{proof}
By the definition, we have $\I_0^\cpt(X_0) \cong \mathbb Z$ and $\N_0^\cpt(X_0) \cong \mathbb R$.
By Theorem 3.9 in \cite{AK}, normal currents have the following property: 
if $T$ is a normal $k$-current in a metric space $Y$, then $\|T\|$ is absolutely continuous in the $k$-dimensional Hausdorff measure $\mathcal H^k$. 
Therefore, we have $\I_k^\cpt(X_0) = 0 = \N_k^\cpt(X_0)$ for all $k \ge 1$. 
This implies the conclusion of the lemma. 
\end{proof}

\section{A way comparing homologies by using cosheaves}\label{sec:cosheaf}
In this section, we recall the notion of cosheaves and give a technique to compare two homologies associated to cosheaves. 
Let $\Ab$ and $\cab$ denote the categories of all abelian groups with group homomorphisms and of all chain complices of abelian groups with chain maps, respectively. 
Throughout the present paper, any chain complex was and will be indexed by nonnegative integers. 
A complex $C \in \cab$ is represented as $C = C_\bullet = (C_\bullet, \pa) = (\cdots \to C_k \xrightarrow{\pa_k} C_{k-1} \to \cdots \xrightarrow{\pa_1} C_0) = \bigoplus_{k \ge 0} C_k$. 
We recall that the $m$-th homology $H_m(C_\bullet)$ of $C_\bullet$ is defined as $H_m(C_\bullet) = \mathrm{Ker}\, \pa_m / \mathrm{Im}\, \pa_{m-1}$ if $m \ge 1$ and as $H_0(C_\bullet) = C_0 / \mathrm{Im}\, \pa_1$. 
For a complex $C_\bullet = (C_\bullet, \pa) \in \cab$ and an abelian group $A \in \Ab$, a homomorphism $\varepsilon : C_0 \to A$ is called an augmentation of $C_\bullet$ if $\varepsilon \circ \partial_1 = 0$ holds, and the augmentation $\varepsilon$ is also denoted by $\varepsilon : C_\bullet \to A$. 
Let $\mathsf C$ be one of $\Ab$ and $\cab$. 
We also define the reduced homology $\tilde H_\ast(C_\bullet)$ of the complex $C_\bullet$ augmented by $\varepsilon$ as $\tilde H_m(C_\bullet) = H_m(C_\bullet)$ if $m \ge 1$ and $\tilde H_0(C_\bullet) = H_0(C_\bullet \xrightarrow{\varepsilon} A) = \mathrm{Ker}\, \varepsilon / \mathrm{Im}\, \pa_1$.

\subsection{Cosheaves} 
Cosheaves were introduced by Bredon \cite{Br}. 
Proofs of statements below about (pre)cosheaves, we refer to the book \cite{Br}. 

In this section, $X$ always denotes a topological space. 
The set $\mathsf O(X)$ of all open subsets of $X$ is regarded as a category in a usual way, that is, open sets $U \in \mathsf O(X)$ are objects and a morphism $U \to V$ uniquely exists if and only if $U \subset V$, for $U, V \in \mathsf O(X)$.
%%%Let $\Ab$ denote the category of all abelian groups and all homomorphisms, and $\cab$ is the category of chain complices as mentioned in \S... 

Let $\mathsf C$ denote one of the categories $\Ab$ or $\cab$. 
A {\it precosheaf} on $X$ (of $\mathsf C$-valued) is a covariant functor $\A$ from $\mathsf O(X)$ to $\mathsf C$.
We will use only a precosheaf $\A$ such that $\A(\emptyset) = 0$. 
For a precosheaf $\A$ on $X$ and $U, V \in \mathsf O(X)$, the morphism induced by $U \subset V$ is denoted by $i_{V,U} = i_{V,U}^{\,\A} : \A(U) \to \A(V)$.
A precosheaf $\A$ is said to be {\it flabby} if $i_{X,U}$ is injective for every $U \in \mathsf O(X)$, or equivalently, $i_{V,U}$ is injective for every $U$ and $V$ in $\mathsf O(X)$ with $U \subset V$. 
\begin{definition}[\cite{Br}] \upshape
A precosheaf $\A$ on $X$ is called a {\it cosheaf} if it satisfies the following: 
For any family of open sets $\mathcal U = \{U_\alpha\}_{\alpha \in A}$ of $X$, setting $U = \bigcup_{\alpha \in A} U_\alpha$, the short complex
\begin{equation} \label{eq:short}
\bigoplus_{\alpha,\beta \in A} \A(U_\alpha \cap U_\beta) \xrightarrow{\Phi} \bigoplus_{\alpha \in A} \A(U_\alpha) \xrightarrow{\varepsilon} \A(U) \to 0
\end{equation}
is exact, where $\Phi = \sum_{\alpha, \beta} i_{U_\alpha, U_\alpha \cap U_\beta} - i_{U_\beta, U_\alpha \cap U_\beta}$ and $\varepsilon = \sum_\alpha i_{U,U_\alpha}$. 
\end{definition}

There is a useful characterization of precosheaves to be cosheaves. 
\begin{proposition}[\cite{Br}]  \label{prop:characterization}
A precosheaf $\A$ on $X$ is cosheaf if and only if it satisfies the following two conditions. 
\begin{itemize}
\item[(a)] For any open sets $U,V \in \mathsf O(X)$, the short complex
\[
\A(U \cap V) \xrightarrow{\Psi} \A(U) \oplus \A(V) \xrightarrow{\varepsilon} \A(U \cup V) \to 0
\]
is exact, where $\varepsilon = i_{U \cup V, U} + i_{U \cup V, V}$ and $\Psi = i_{U, U \cap V} - i_{V, U \cap V}$.
\item[(b)] If a family $\{U_\alpha\}_{\alpha \in A}$ of open sets in $X$ is directed, that is, for any $\alpha, \alpha' \in A$, there is $\alpha'' \in A$ such that $U_\alpha \cup U_{\alpha'} \subset U_{\alpha''}$, then the map 
\[
\varinjlim_{\alpha \in A} \A(U_\alpha) \to \A(\bigcup_{\alpha \in A} U_\alpha)
\] 
is an isomorphism. 
\end{itemize}
\end{proposition}

The short sequence \eqref{eq:short} can be extended on the left side as a %%% gives a %%%an associated 
%%%can be extended to a 
chain complex as follows. 
For $k \ge 1$, let us define a map 
\[
\Phi_k = \Phi_k^{\mathcal U, \A} : \bigoplus_{\alpha_0, \dots, \alpha_k \in A} \A(U_{\alpha_0 \dots \alpha_k}) \to \bigoplus_{\alpha_0, \dots, \alpha_{k-1} \in A} \A(U_{\alpha_0 \dots \alpha_{k-1}})
\]
associated to a family $\mathcal U = \{U_\alpha\}_{\alpha \in A}$ of open sets, by, 
\[
\Phi_k = \sum_{\alpha_0, \dots, \alpha_k} \sum_{p=0}^k (-1)^p\, i_{\hat U_{\alpha_p}, U_{\alpha_0 \dots \alpha_k}} 
\]
where $U_{\alpha_0 \dots \alpha_k}$ denotes the intersection $\bigcap_{p=0}^k U_{\alpha_p}$ and $\hat U_{\alpha_p}$ is $\bigcap_{j \neq p} U_{\alpha_j}$.
Here, $\Phi_1^{\mathcal U, \A}$ is nothing but $\Phi$ in \eqref{eq:short}.
We set 
\[
\check C_k(\mathcal U, \A) := \bigoplus_{\alpha_0, \dots, \alpha_k \in A} \A(U_{\alpha_0 \dots \alpha_k}). 
\]
The maps $(\Phi_k^{\mathcal U, \A})_{k \ge 1}$ satisfies $\Phi_{k-1}^{\mathcal U, \A} \circ \Phi_k^{\mathcal U, \A} = 0$ for $k \ge 2$. %%%%, where we set $\Phi_0^{\mathcal U, \A} = 0$. 
So, the group 
\[
\check C_\bullet(\mathcal U, \A) := \bigoplus_{k \ge 0} \check C_k(\mathcal U,\A)
\]
becomes a chain complex with boundary map $\Phi = \Phi^{\mathcal U, \A} = (\Phi_k^{\mathcal U,\A})_{k \ge 1}$ and $\varepsilon : \check C_0(\mathcal U, \A) \to \A(\bigcup_\alpha U_\alpha)$ is an augmentation of this complex.

\begin{proposition}[\cite{Br}] \label{prop:flabby}
Let $\A$ be a flabby cosheaf on $X$. %%% valued in $\mathsf C$. 
Then the sequence 
\[
\cdots \to \check C_k(\mathcal U, \A) 
\xrightarrow{\Phi_k} \check C_{k-1}(\mathcal U, \A) \to 
\cdots \xrightarrow{\Phi_1} 
\check C_0(\mathcal U, \A) \xrightarrow{\varepsilon} \A(X) \to 0
\]
is exact, for any open covering $\mathcal U$ of $X$.
\end{proposition}

For two open coverings $\mathcal U = \{U_i\}_{i \in I}$ and $\mathcal V = \{V_j\}_{j \in J}$ of $X$, we say that $\mathcal V$ is a {\it refinement} of $\mathcal U$ if there is a map $\lambda : J \to I$ between index sets such that $V_j \subset U_{\lambda(j)}$ holds for every $j \in J$. 
Such a map $\lambda$ is called a {\it refinement projection} from $\mathcal V$ to $\mathcal U$, and is denoted by $\lambda : \mathcal V \prec \mathcal U$. 
The refinement projection $\lambda$ induces a map $\lambda_\# : \check C_k(\mathcal V, \A) \to \check C_k(\mathcal U,\A)$ defined by  
\[
\lambda_\# : \A(V_{j_0 \dots j_k}) \to \A(U_{\lambda(j_0)\dots \lambda(j_k)})
\]
for each component of $\check C_k(\mathcal V, \A)$.
Actually, we have 
\begin{lemma} \label{lem:check0}
$\lambda_\# : \check C_\bullet(\mathcal V,\A) \to \check C_\bullet(\mathcal U,\A)$ is a chain map.
\end{lemma}
%%%The induced map between homologies is denoted by $\lambda{}_\ast : H_\ast(\check C_\bullet(\mathcal V, \A)) \to H_\ast(\check C_\bullet(\U,\A))$.
%%%We wil use the conventions. 
\begin{proof}
Let us consider the following diagram 
\[
\begin{CD}
&\A(V_{j_0 \dots j_k}) @> \lambda_\# >> &\A(U_{\lambda(j_0) \dots \lambda(j_k)}) \\
&@V i_{\hat V_{j_p}, V_{j_0 \dots j_k}} VV &@VV i_{\hat U_{\lambda(j_p)}, U_{\lambda(j_0) \dots \lambda(j_k)}} V \\
&\A(\hat V_{j_p}) @> \lambda_\# >> &\A(\hat U_{\lambda(j_p)}).
\end{CD}
\]
This diagram commutes from the definition. 
It implies the conclusion of the lemma. 
\end{proof}

%%% The nerve $N(\mathcal U)$ of a covering $\mathcal U = \{U_i\}_{i \in I}$ of $X$ is an abstract complex whose $k$-simplicies are $(k+1)$-tuples $(i_0,\dots,i_k)$ of $i_0, \dots, i_k \in I$ such that $U_{i_0 \dots i_k} = U_{i_0} \cap \dots \cap U_{i_k} \neq \emptyset$.

\subsection{Double complices}

If $\A$ is a precosheaf of $\cab$-valued, then we denote it by $\A = \A_\bullet$. 
In this case, $\check C_\bullet(\mathcal U, \A_\bullet)$ becomes a double complex by the boundary maps $\Phi$ and $\partial$. 
Indeed, the following holds.
\begin{lemma} \label{lem:check}
We have $\pa \Phi = \Phi \pa$, where $\pa$ acts on each companent of $\check C_p(\U, \A_\bullet) = \bigoplus_{i_0, \dots, i_p} \A_\bullet(U_{i_0 \dots i_p})$ for all $p$.
%%%Further, $\epsilon : \check C_0(\mathcal U, \A_\bullet) \to \A_\bullet(X)$ is an augmentation of the complex 

Further, if $\lambda : \mathcal V \prec \mathcal U$ is a refinement projection, 
then the map $\lambda_\# : \check C_\bullet(\mathcal V, \A_\bullet) \to \check C_\bullet(\U, \A_\bullet)$ is a chain map of double complices. 
\end{lemma}
\begin{proof}
%%%It suffices to show that the following diagram 
By the definition, the following diagram 
\[
\begin{CD}
& \A_k(U_{i_0 \dots i_p}) @> i_{\hat U_{i_j}, U_{i_0 \dots i_p}} >> & \A_k(\hat U_{i_j}) \\
& @V \partial VV &@VV \partial V \\
& \A_{k-1}(U_{i_0 \dots i_p}) @> i_{\hat U_{i_j}, U_{i_0 \dots i_p}} >> & \A_{k-1}(\hat U_{i_j}) 
\end{CD}
\]
commutes. 
Here, $\hat U_{i_j} = \bigcap_{\ell \neq j} U_{i_\ell}$.
The first statement follows from this and the definition of $\Phi$. 
%%%%Indeed, we recall that the map $\Phi$ is defined by 
%%%%\[
%%%%\Phi(c) = \sum_{j} i_{\hat U_j, U_{i_0 \dots i_p}} c
%%%%\]
%%%%for $c \in \A_\bullet(U_{i_0 \dots i_p})$. 
%%%Since $\A_\bullet = (\A_\bullet, \partial)$ is a functor of $\cab$-valued, the map $i_{\hat U_j, U_{i_0 \dots i_p}}$ commutes with $\partial$. 
%%%This complete the proof.

The second statement follows from Lemma \ref{lem:check0} and a similar argument done there. 
\end{proof}

Now, let us consider an abstract double complex $A = A_{\bullet, \bullet} = \bigoplus_{i,j \ge 0} A_{i,j}$ of nonnegative degrees. 
We denote its boundary maps by 
%%%The boundary maps of $A$ are denoted by 
\[
\Phi = \Phi_i = \Phi_{i,j} : A_{i,j} \to A_{i-1,j} \text{ and } 
\pa = \pa_j = \pa_{i,j} : A_{i,j} \to A_{i,j-1}.
\]
The total complex $\mathcal A_\bullet$ of $A$ is defined by $\mathcal A_m = \bigoplus_{i+j=m} A_{i,j}$ together with the boundary map $\sum_{i+j=m} \Phi_i + (-1)^j \pa_j : \mathcal A_m \to \mathcal A_{m-1}$.
Let $B  = B_\bullet = \bigoplus_{j \ge 0} B_j$ be a chain complex with the boundary map $\pa^B : B_j \to B_{j-1}$.
Suppose that there is a map $\varepsilon : A_{0,\bullet} \to B_\bullet$ consisting of morphisms $\varepsilon_j : A_{0,j} \to B_j$ such that $\varepsilon_j \pa_{j+1} = \pa_{j+1}^B \varepsilon_{j+1}$ and $\varepsilon_j \Phi_{1,j} = 0$ for every $j \ge 0$.
This situation is presented as in the following diagram %%%%.... 
\begin{equation*} \label{eq:diagram1}
{\footnotesize
\hspace{-20pt}
\begin{CD}
@. \vdots @. \vdots @. \vdots @. \vdots \\
@. @V \pa VV @V \pa VV @V \pa VV @V \partial^B VV \\
\cdots @> \Phi >> A_{2,2} @> \Phi >> A_{1,2} @> \Phi >> A_{0,2} @> \varepsilon >> B_2  \\
@. @V \pa VV @V \pa VV @V \pa VV @V \partial^B VV \\
\cdots @> \Phi >> A_{2,1} @> \Phi >> A_{1,1} @> \Phi >> A_{0,1} @> \varepsilon >> B_1  \\
@. @V \pa VV @V \pa VV @V \pa VV @V \partial^B VV \\
\cdots @> \Phi >> A_{2,0} @> \Phi >> A_{1,0} @> \Phi >> A_{0,0} @> \varepsilon >> B_0. \end{CD}
}
\end{equation*}

In this case, a morphism $\varepsilon_\ast : H_m(\mathcal A_\bullet) \to H_m(B_\bullet)$ is defined in a canonical way. 
Then, the following is well-known. 
\begin{lemma}\label{lem:double}
Let $A = (A_{i,j},\Phi,\pa),B = (B_j,\pa^B)$ and $\varepsilon$ be as above. 
Let $m \ge 0$. 
If the sequence 
\[
A_{m-j,j} \xrightarrow{\Phi} A_{m-j-1,j+1} \xrightarrow{\Phi} \cdots \xrightarrow{\Phi} A_{0,j} \xrightarrow{\varepsilon} B_j \to 0
\]
is exact, for each $0 \le j \le m$, then the map $\varepsilon_\ast : H_m(\mathcal A_\bullet) \to H_m(B_\bullet)$ is surjective.
Namely, for any $c \in B_m$ with $\partial^B c=0$, there exist elements $c_{m-k,k} \in A_{m-k,k}$ for $0 \le k \le m$ satisfying 
\[
\varepsilon c_{0,m} = c \text{ and } \Phi c_{m-k, k} = \partial c_{m-k-1, k+1}
\]
for every $k$ with $0 \le k \le m-1$.
%%%%Further, $\varepsilon_\ast : H_{m-1}(\mathcal A_\bullet) \to H_{m-1}(B_\bullet)$ is an isomorphism, if $m \ge 1$.
\end{lemma}

\subsection{Local triviality of precosheaves}
Let $X$ be a space. 
A precosheaf $\H$ on $X$ is said to be {\it locally trivial} (or be locally zero) if for any $x \in X$ and $U \in \mathsf O(X)$ with $x \in U$, there is $V \in \mathsf O(X)$ with $x \in V \subset U$ such that the map $i_{V,U}^{\,\H} : \H(V) \to \H(U)$ is trivial.

A topological space is said to be paracompact if any open covering of it admits a locally finite refinement.
Recall that any metric space is paracompact.

\begin{theorem}[\cite{Br}] \label{thm:LT}
Let $X$ be a paracompact topological space. 
If $\H$ is a locally trivial precosheaf on $X$, then for any open covering $\mathcal U = \{U_i\}$ of $X$, there is an open refinement $\mathcal V = \{V_j\}$ of $\mathcal U$ with a refinement projection $\lambda : \mathcal V \prec \mathcal U$ such that the map 
\[
\H(V_{j_0 \dots j_m}) \to \H(U_{\lambda(j_0) \dots \lambda(j_m)})
\]
is trivial for every indices $j_0, \dots, j_m$ of $\mathcal V$.
\end{theorem}

\subsection{Local triviality of spaces}
Let us denote by $\mathsf{Met}$ the category of metric spaces and locally Lipschitz maps and by $\mathsf{Top}$ the category of topological spaces and continuous maps. 
Let $\mathsf D$ be one of $\mathsf{Met}$ and $\mathsf{Top}$, and $\mathsf C$ denote one of $\Ab$ and $\cab$.
Let us consider a covariant functor $H : \mathsf D \to \mathsf C$.
Then, for each $X \in \mathsf D$, we obtain a precosheaf $H : \mathsf O(X) \to \mathsf C$ on $X$.
We say that a space $X$ is $H$-{\it locally trivial} if the precosheaf $H$ on $X$ is locally trivial.
In this terminologies, the following holds. 

\begin{proposition} \label{prop:H-LT is stable}
Let $H : \mathsf D \to \mathsf C$ be a covariant functor. 
The $H$-local triviality is inherited to open subsets. 

Further, when $\mathsf D = \met$, the $H$-local triviality 
is stable under locally bi-Lipschitz homeomorphisms.
\end{proposition}
\begin{proof}
Let $X \in \mathsf D$ and $X'$ an open subset of $X$. 
We assume that $X$ is $H$-locally trivial. 
Let us take $x \in X'$ and an open neighborhood $U$ of $x$ in $X'$. 
Since $U$ is open in $X$ and $X$ is $H$-locally trivial, there is $V \in \mathsf O(X)$ with $x \in V \subset U$ such that $H(\iota)=0$, where $\iota : V \to U$ is the inclusion. 
Since $V \in \mathsf O(X')$, we conclude that $X'$ is $H$-locally trivial. 

Further, we assume that $X$ is a metric space and take another metric space $Y$. 
Let $f : X \to Y$ be a locally bi-Lipschitz homeomorphism.
To prove the statement, we may assume that $f$ is a bi-Lipschitz homeomorphism.
Let us take $y \in Y$ and an open neighborhood $V$ of $y$ in $Y$.
Set $x = f^{-1}(y) \in X$, $L = \max \{\Lip(f), \Lip(f^{-1})\}$. 
By the $H$-local triviality of $X$, we obtain $r > 0$ such that $U(x,r) \subset f^{-1}(V)$ and $H(\iota) = 0$, where $\iota : U(x,r) \to f^{-1}(V)$ is the inclusion.
Then, $U(y, L^{-1}r) \subset V$ and the inclusion $\iota' : U(y, L^{-1}r) \to V$ is decomposed as $\iota' = f^{-1} \circ \iota \circ f$. 
Hence, we obtain $H(\iota') = 0$.
This completes the proof.
\end{proof}

\subsection{A way to compare homologies by using cosheaves}

In this subsection, we prove the following important 
\begin{theorem} \label{thm:cosheaf}
Let $\A_\bullet$ and $\A_\bullet'$ be flabby cosheaves on a paracompact topological space $X$ of $\cab$-valued together with a natural transformation $\eta : \A_\bullet \to \A_\bullet'$. 
Let $A$ and $A'$ be precosheaves on $X$ of $\Ab$-valued together with natural transformations $\zeta : A \to A'$, $\xi : \A_0 \to A$ and $\xi' : \A_0' \to A'$ such that $\xi' \eta = \zeta \xi$. 
Suppose that $\xi : \A_\bullet \to A$ and $\xi' : \A_\bullet' \to A'$ are augmentations. 
Further, we assume that $\eta : \A_0(U) \to \A_0'(U)$ is surjective and $\zeta : A(U) \to A'(U)$ is injective for each $U \in \mathsf O(X)$. 
Then, the following holds. 
\begin{itemize}
\item[(1)] If the precosheaves $\tilde H_p(\A_\bullet)$ are locally trivial on $X$ for $0 \le p \le m$, then $\eta_\ast : H_m(\A_\bullet(U)) \to H_m(\A_\bullet'(U))$ is injective for every $U \in \mathsf O(X)$. 
Here, $\tilde H_p(\A_\bullet)$ is the $p$-th homology of the augmented complex $\xi : \A_\bullet \to A$. 
\item[(2)] If the precosheaves %%$\tilde H_p(\A_\bullet)$ and 
$\tilde H_p(\A_\bullet)$ and $H_q(\A_\bullet')$ are locally trivial on $X$ for $0 \le p \le m-1$ and 
$1 \le q \le m$, then $\eta_\ast : H_m(\A_\bullet(U)) \to H_m(\A_\bullet'(U))$ is surjective for every $U \in \mathsf O(X)$. 
\end{itemize}
\end{theorem}
Note that the surjectivity of $\eta$ implies that $H_0(\A_\bullet(U)) \to H_0(\A_\bullet'(U))$ is always surjective, for every open set $U \subset X$. 
\begin{proof}
Let $X,\A_\bullet,\A_\bullet',A,A',\eta,\zeta,\xi,\xi'$ be in the assumption. 
First, we assume that the precosheaves $\tilde H_p(\A_\bullet)$ are locally trivial on $X$ for $0 \le p \le m$. 
We prove that $\eta_\ast : H_m(\A_\bullet(X)) \to H_m(\A_\bullet'(X))$ is injective. 
By Theorem \ref{thm:LT}, we obtain a sequence of open coverings $\{\mathcal U_p\}_{p=0}^m$ of $X$ such that $\U_p$ is a refinement of $\U_{p+1}$ together with a refinement projection $\lambda_p : \U_p \prec \U_{p+1}$ and that 
\[
(\lambda_p)_\ast : \tilde H_p(\A_\bullet(U_{i_0\dots i_\ell})) \to \tilde H_p(\A_\bullet(\lambda_p(U_{i_0 \dots i_\ell})))
\]
are trivial maps for all finite elements $U_{i_0}, \dots, U_{i_\ell} \in \U_p$, for each $p = 0, \dots, m-1$. %%%...
Here, we write $\lambda_p(U_{i_0 \dots i_\ell}) = \cap_{a=0}^\ell \lambda_p (U_{i_a})$.
For $\U = \U_p$, we consider the following diagram. 
\begin{equation*} \label{eq:diagram}
{\footnotesize
\hspace{-20pt}
\begin{CD}
@. \vdots @. \vdots @. \vdots @. \vdots \\
@. @V \pa VV @V \pa VV @V \pa VV @V \partial VV \\
\cdots @> \Phi >> \check C_2(\U,\A_2) @> \Phi >> \check C_1(\U,\A_2) @> \Phi >> \check C_0(\U,\A_2) @> \varepsilon >> \A_2(X) @>>> 0\\
@. @V \pa VV @V \pa VV @V \pa VV @V \partial VV \\
\cdots @> \Phi >> \check C_2(\U,\A_1) @> \Phi >> \check C_1(\U,\A_1) @> \Phi >> \check C_0(\U,\A_1) @> \varepsilon >> \A_1(X) @>>> 0\\
@. @V \pa VV @V \pa VV @V \pa VV @V \partial VV \\
\cdots @> \Phi >> \check C_2(\U,\A_0) @> \Phi >> \check C_1(\U,\A_0) @> \Phi >> \check C_0(\U,\A_0) @> \varepsilon >> \A_0(X) @>>> 0\\
@. @V \xi VV @V \xi VV @V \xi VV @. \\
\cdots @. \check C_2(\U,A) @. \check C_1(\U,A) @. \check C_0(\U,A) @. @. 
\end{CD}
%%%\begin{CD}
%%%@. \vdots @. \vdots @. \vdots @. \vdots \\
%%%@. @V \pa VV @V \pa VV @V \pa VV @V \partial VV \\
%%%\cdots @> \Phi >> \check C_2(\U,\A_2) @> \Phi >> \check C_1(\U,\A_2) @> \Phi >> \check C_0(\U,\A_2) @> \varepsilon >> \A_2(X) @>>> 0 \\
%%%@. @V \pa VV @V \pa VV @V \pa VV @V \partial VV \\
%%%\cdots @> \Phi >> \check C_2(\U,\A_1) @> \Phi >> \check C_1(\U,\A_1) @> \Phi >> \check C_0(\U,\A_1) @> \varepsilon >> \A_1(X) @>>> 0 \\
%%%@. @V \pa VV @V \pa VV @V \pa VV @V \partial VV \\
%%%\cdots @> \Phi >> \check C_2(\U,\A_0) @> \Phi >> \check C_1(\U,\A_0) @> \Phi >> \check C_0(\U,\A_0) @> \varepsilon >> \A_0(X) @>>> 0 \\
%%%@. @V \xi VV @V \xi VV @V \xi VV @. \\
%%%\cdots @. \check C_2(\U,A) @. \check C_1(\U,A) @. \check C_0(\U,A) @. @. \\
%%%@. @VVV @VVV @VVV @. \\ 
%%%@. 0 @. 0 @. 0 @. @. 
%%%\end{CD}
}
\end{equation*}
Here, $\partial$ denotes the boundary map of the complex $\A_\bullet$.
Let us take $c \in \A_m(X)$ with $\pa c = 0$. 
Since $\A_\bullet$ is a flabby cosheaf, by Lemma \ref{lem:double}, there are elements $c_{p,m-p} \in \check C_p(\U_0,\A_{m-p})$ such that 
\[
\varepsilon c_{0,m} = c \text{ and }
\pa c_{p,m-p} = \Phi c_{p+1,m-p-1}
\] 
for every $p$ with $1 \le p \le m-1$.
Further, we suppose that $c$ satisfies $\pa' c' = \eta c$ for some $c' \in \A_{m+1}'(X)$, where $\pa'$ is the boundary map of $\A_\bullet'$. 
Then, to prove that $\eta_\ast$ is injective, it suffices to show that there is an element $\bar c \in \A_{m+1}(X)$ such that $\partial \bar c = c$.
Since $\varepsilon : \check C_0(\U_0, \A_{m+1}') \to \A_{m+1}'(X)$ is surjective, there exists $c_{0,m+1}' \in \check C_0(\U_0, \A_{m+1}')$ such that $\varepsilon c_{0,m+1}' = c'$.
%%%Hence, we have $\partial' \varepsilon c_{0,m+1}' = \varepsilon \partial' c_{0,m+1}' = \eta c = \partial' c'$.
Then, we have 
\[
\varepsilon (\partial' c_{0,m+1}' - \eta c_{0,m}) = 0. 
\]
Therefore, there is an element $c_{1,m}' \in \check C_1(\mathcal U_0, \A_m')$ such that 
\[
\Phi' c_{1,m}' = \partial' c_{0,m+1}' - \eta c_{0,m}.
\]
So, we have 
\[
\Phi' \partial' c_{1,m}' = - \eta \partial c_{0,m} = -\eta \Phi c_{1,m-1} = - \Phi' \eta c_{1,m-1}. 
\]
Hence, there is an element $c_{2,m-1}' \in \check C_2(\mathcal U_0, \A_{m-1}')$ such that $\Phi' c_{2,m-1}' = \partial' c_{1,m}' + \eta c_{1,m-1}$. 
Repeating such a diagram chasing, we obtain elements $c_{p+1,m-p}' \in \check C_{p+1}(\mathcal U_0, \A_{m-p}')$ such that 
\begin{align*}
&\Phi' c_{p+1.m-p}' = \partial' c_{p,m-p+1}' + (-1)^{p+1} \eta c_{p, m-p}
\end{align*}
for $1 \le p \le m$.
Since $\eta : \check C_{m+1}(\mathcal U_0,\A_0) \to \check C_{m+1}(\mathcal U_0,\A_0')$ is surjective, there is a $c_{m+1,0} \in \check C_{m+1}(\mathcal U_0,\A_0)$ such that $\eta c_{m+1,0} = c_{m+1,0}'$.
Then, we have 
\[
\zeta \xi \Phi c_{m+1,0} = (-1)^{m+1} \zeta \xi c_{m,0}.
\]
%%%{\color{red} Indeed, we have 
%%%\[
%%%\eta \Phi c_{m+1,0} = \Phi' \eta c_{m+1,0} = \Phi' c_{m+1,0}' = \partial' c_{m,1}' + (-1)^{m+1} \eta c_{m,0},
%%%\]
%%%\[
%%%\xi' \eta \Phi c_{m+1,0} = \xi' \partial' c_{m,1}' + (-1)^{m+1} \xi' \eta c_{m,0} = (-1)^{m+1} \xi' \eta c_{m,0}
%%%\]
%%%あとは, $\xi' \eta = \zeta \xi$を使う. }
Since $\zeta$ is injective, we obtain 
\[
\xi \Phi c_{m+1,0} = (-1)^{m+1} \xi c_{m,0}. 
\]
By the property of $\lambda_0 : \mathcal U_0 \prec \mathcal U_1$, there is a $c_{m,1} \in \check C_m(\mathcal U_1, \A_1)$ such that 
\[
\partial c_{m,1} = (\lambda_0)_\# (\Phi c_{m+1,0} + (-1)^m c_{m,0}). 
\]
Therefore, we obtain 
\[
\partial \Phi c_{m,1} = (\lambda_0)_\# (-1)^m \partial c_{m-1,1}.
\]
Hence, by the property of $\lambda_1$, we obtain $c_{m-1,2} \in \check C_{m-1}(\mathcal U_2, \A_2)$ such that 
\[
\partial c_{m-1,2} = (\lambda_1)_\# (\Phi c_{m,1} + (-1)^{m+1} (\lambda_0)_\# c_{m-1,1})
\]
Repeating this argument, we have elements $c_{m-p,p+1} \in \check C_{m-p}(\mathcal U_{p+1}, \A_{p+1})$ satisfying 
\[
\partial c_{m-p,p+1} = (\lambda_p)_\# \Phi c_{m-p+1,p} + (-1)^{m+p} (\tilde \lambda_p)_\# c_{m-p,p}
\]
for $1 \le p \le m$, where $\tilde \lambda_p {}_\# = \lambda_p {}_\# \circ \cdots \circ \lambda_0 {}_\#$.
Then, setting $\bar c = \varepsilon c_{0,m+1} \in \A_{m+1}(X)$, we have 
\[
\partial \bar c = \varepsilon c_{0,m} = c.
\]
This implies that $\eta_\ast : \tilde H_m(\A_\bullet(X)) \to \tilde H_m(\A_\bullet'(X))$ is injective. 
That is, this completes the proof of (1). 

Next, we prove that $\eta_\ast : H_m(\A_\bullet(X)) \to H_m(\A_\bullet'(X))$ is surjective, assuming the assumption of (2). 
By a thing mentioned at before starting the proof, we may assume that $m \ge 1$. 
%%%By a thing before starting the present proof, we may assume that $m \ge 1$. 
Let us take $c' \in \A_m'(X)$ with $\partial' c' = 0$.
Let $\mathcal W$ be an arbitrary open covering of $X$.
Then, there are $c_{p,m-p}' \in \check C_p(\mathcal W,\A_{m-p}')$ such that 
\begin{equation} \label{eq:c'}
\left\{
\begin{aligned}
\varepsilon c_{0,m}' &= c' \\
\partial' c_{p,m-p}' &= \Phi' c_{p+1,m-p-1}'
\end{aligned}
\right.
\end{equation}
for all $1 \le p \le m-1$.
Since $\eta : \A_0 \to \A_0'$ is surjective, there is a $c_{m,0} \in \check{C}_m(\mathcal W, \A_0)$ such that 
\begin{equation} \label{eq:surj}
\eta c_{m,0} = c_{m,0}'.
\end{equation}
By the assumption, we have 
\[
\zeta \xi \Phi c_{m,0} = \xi' \eta \Phi c_{m,0} = \xi' \Phi' c_{m,0}' = \xi' \partial' c_{m-1,1}' = 0. 
\]
Since $\zeta : A \to A'$ is injective, we conclude 
\begin{equation} \label{eq:inj}
\xi \Phi c_{m,0}= 0.
\end{equation} 

Now, let us consider the following sequence of open coverings of $X$ such that 
\[
\mathcal V_0 \prec \U_1 \prec \mathcal V_1 \prec \cdots \prec \mathcal V_{k-1} \prec \mathcal U_{k} \prec \mathcal V_{k} \prec \cdots \prec \mathcal V_{m-1} \prec \U_m.
\]
Here, the refinemet projections are denoted by 
\[
\lambda_{k-1} : \mathcal V_{k-1} \prec \U_k \text{ and } \mu_\ell : \mathcal U_\ell \prec \mathcal V_\ell
\]
for $1 \le k \le m$ and $1 \le \ell \le m-1$. 
By Theorem \ref{thm:LT}, we may assume that the induced maps 
\begin{equation} \label{eq:LT}
\begin{aligned}
(\lambda_{k-1})_\ast &: \tilde H_{k-1}(\A_\bullet(V_{j_0 \dots j_p})) \to \tilde H_{k-1}(\A_\bullet(\lambda_{k-1}(V_{j_0 \dots j_p}))) \\
(\mu_\ell)_\ast &: H_\ell(\A_\bullet'(U_{i_0 \dots i_p})) \to H_\ell(\A_\bullet'(\mu_k(U_{i_0 \dots i_p}))
\end{aligned}
\end{equation}
are trivial, for all $p \ge 0$, $V_{j_0}, \dots, V_{j_p} \in \mathcal V_{k-1}$, $U_{i_0}, \dots, U_{i_p} \in \U_k$, $1 \le k \le m$ and $1 \le \ell \le m-1$.
As seen above, we have a sequence $\{ c_{p,m-p}' \in \check C_p(\mathcal V_0, \A_{m-p}') \}_{0 \le p \le m}$ and an element $c_{m,0} \in \check C_m(\mathcal V_0,\A_m)$ satisfying \eqref{eq:c'}, \eqref{eq:surj} and \eqref{eq:inj}. 
By \eqref{eq:inj} and the triviality of \eqref{eq:LT}, there is an element $c_{m-1,1} \in \check C_{m-1}(\U_1, \A_1)$ such that 
\[
\partial c_{m-1,1} = \lambda_0{}_\# \Phi c_{m,0}. 
\]
Then, we have 
\begin{align*}
\partial \Phi c_{m-1,1} &= \lambda_0{}_\# \Phi^2 c_{m,0} = 0, \\
\partial' \eta c_{m-1,1}&= \lambda_0{}_\# \Phi' c_{m,0}' = \partial' \lambda_0{}_\# c_{m-1,1}'.
\end{align*}
By the second equality and the triviality of $(\mu_1)_\ast$, there is an element $c_{m-1,2}' \in \check C_{m-1}(\mathcal V_1, \A_2')$ such that 
\[
\partial' c_{m-1,2}' = \mu_1{}_\# (\eta c_{m-1,1} - \lambda_0{}_\# c_{m-1,1}').
\]
The rest equality and the triviality of $(\lambda_1)_\ast$ guarantee the existence of an element $c_{m-2,2} \in \check C_{m-2}(\mathcal U_2, \A_2)$ satisfying 
\[
\partial c_{m-2,2} = \lambda_1{}_\# \mu_1{}_\# \Phi c_{m-1,1}.
\]
Repeating such an argument, we obtain sequences of elements $c_{m-p,p+1}' \in \check C_{m-p}(\mathcal V_p, \A_{p+1})$ and $c_{m-p,p} \in \check C_{m-p}(\U_p, \A_p)$ such that 
\begin{align*}
\partial' c_{m-p,p+1}' &= \mu_p{}_\# (\eta c_{m-p,p}-\nu_{p-1} c_{m-p,p}'), \\
\partial c_{m-p,p} &= \lambda_{p-1}{}_\# \mu_{p-1} {}_\# \Phi c_{m-p+1,p-1}
\end{align*}
for all $2 \le p \le m$, where $\nu_{p-1} = \lambda_{p-1}{}_\# \mu_{p-1}{}_\# \cdots \lambda_1{}_\# \mu_1 {}_\# \lambda_0 {}_\#$. 
Let us consider elements $c := \varepsilon c_{0,m} \in \A_m(X)$ and $\bar c' := \varepsilon c_{0,m+1}' \in \A_{m+1}'(X)$. 
By the construction, they satisfy 
\[
\eta c = c' + \partial' \bar c'.
\]
Therefore, we know that $\eta_\ast : H_m(\A_\bullet(X)) \to H_m(\A_\bullet'(X))$ is surjective. 
This completes the proof of Theorem \ref{thm:cosheaf}. 
\end{proof}

\subsection{Spaces of currents as cosheves}
%%%In this subsection, we prove that assigning the space of integral currents (or normal currents) of compact support to each open set is a cosheaf. 
%%%Further, we show fundamental properties for this cosheaf. 
%%%
Let $X$ be a metric space.
For each open set $U \in \mathsf O(X)$, we obtain chain complicies $\N_\bullet^\cpt(U)$ and $\I_\bullet^\cpt(U)$. 
Assignments $U \mapsto \N_\bullet^\cpt(U)$ and $U \mapsto \I_\bullet^\cpt(U)$ are precosheaves on $X$ of $\cab$-valued, by the definition. 
We have 
\begin{lemma} \label{lem:I is cosheaf}
The precosheaves $\N_\bullet^\cpt$ and $\I_\bullet^\cpt$ on $X$ are actually flabby cosheaves. 
\end{lemma}
\begin{proof}
Let us denote by $\mathbf C_k$ one of $\N_k^\cpt$ and $\I_k^\cpt$. 
By Lemma \ref{lem:coflabby}, the precoseaf $\mathbf C_k$ is flabby. 
By Lemma \ref{lem:a}, we already know that $\mathbf C_\bullet$ satisfies the condition (a) in Proposition \ref{prop:characterization}. 
We prove the condition (b) in Proposition \ref{prop:characterization}.
Let $\{U_\alpha\}$ be a directed family of open sets. 
Since $\mathbf C_k$ is flabby, the map $\mathbf C_k(U_\alpha) \to \mathbf C_k(U)$ is injective for every $\alpha$, where $U = \bigcup_\alpha U_\alpha$. 
Because the functor taking the direct limit is exact, the canonical map 
\[
\varinjlim \mathbf C_k(U_\alpha) \to \mathbf C_k(U)
\]
is injective. 
Let us prove that this map is surjective. 
Let $T \in \mathbf C_k(U)$. 
Since $T$ has a compact support and $\{U_\alpha \}$ is directed, there is an $\alpha$ such that $\spt(T) \subset U_\alpha$. 
By Lemma \ref{lem:cpt spt}, $T$ can be regarded as a current in $U_\alpha$. 
This implies that the considered map is surjective. 
This completes the proof. %%%......
\end{proof}

\subsection{Singular (Lipschitz) coheaves} \label{subsec:singular cosheaf}
For a topological space $X$, the singular chain complex $S_\bullet$ of each open set gives a flabby precoheaf on $X$. 
In general, $S_\bullet$ is not a cosheaf. 
Further, taking subdivisions infinitely many times, we obtain a coheaf on $X$ as follows. 
\begin{example}[\cite{Br}] \upshape \label{ex:singular cosheaf}
For each topological space $X$, 
let us consider a sequence of barycentric subdivisions 
\[
S_\bullet(X) \xrightarrow{\mathrm{Sd}} S_\bullet(X) \xrightarrow{\mathrm{Sd}} \cdots \xrightarrow{\mathrm{Sd}} S_\bullet(X) \xrightarrow{\mathrm{Sd}} \cdots
\]
and its direct limit, denoted by $\mathfrak S_\bullet(X)$. 
In this case, for degree $k$, the direct limit $\mathfrak S_k(X)$ is represented as the quotient group of $S_k(X)$ identifying $c$ and $c'$ whenever $\mathrm{Sd}^m c = \mathrm{Sd}^{m'} c'$ for some $m , m' \ge 0$. 

Then, a correspondence $\mathsf O(X) \ni U \mapsto \mathfrak S_\bullet(U) \in \cab$ is a flabby cosheaf on $X$. 
Indeed, because $S_\bullet$ is flabby and the direct limit $\varinjlim$ is an exact functor, $\mathfrak S_\bullet$ is flabby. 
%%%Since $S_\bullet$ already satisfies (a) in Proposition \ref{prop:...} and $\varinjlim$ is exact, $\mathfrak S_\bullet$ satisfies (a).  
%%%To see that $\mathfrak S_\bullet$ is a cosheaf, we only check that (b) in Proposition \ref{prop:...}. 
%%%Let us take a chain $c \in S_k(U)$, where $U = \bigcup_i U_i$ for some directed family of open sets $\{U_i\}$. 
%%%Then, $c$ is contained in some $U_i$
To prove that $\mathfrak S_\bullet$ is a cosheaf, it suffices to check the properties (a) and (b) in Proposition \ref{prop:characterization}. 
However, it is trivial, by the definition. 
%%%This cosheaf $\mathcal S_\bullet$ is called the {\it singular cosheaf} on $X$.
Finally, noticing that the identity map on $S_\bullet$ and the subdivision are chain homotopy equivalent, we have a natural isomorphism 
\[
\eta_X : H_\ast(X) \cong H_\ast(\mathfrak S_\bullet(X)). 
\]
%%%as functors on the category of topological spaces. 
Here, the naturality means the functorial sense, that is, for a continuous map $f : X \to Y$ between topological spaces, the induced maps $f_\ast : H_\ast(X) \to H_\ast(Y)$ and $f_\ast : H_\ast(\mathfrak S_\bullet(X)) \to H_\ast(\mathfrak S_\bullet(Y))$ satisfy $\eta_Y f_\ast = f_\ast \eta_X$.
\end{example}

In Example \ref{ex:singular cosheaf}, instead of a topological space and singular chains with a metric space and singular Lipschitz chains, respectively, we obtain a flabby cosheaf $\mathfrak S_\bullet^\Lip$ on a metric space $X$ of $\cab$-valued. 
That is, we set 
\[
\mathfrak S_\bullet^\Lip (U) = \varinjlim (S_\bullet^\Lip(U) \xrightarrow{\mathrm{Sd}} S_\bullet^\Lip(U) \xrightarrow{\mathrm{Sd}} \cdots )
\]
for each $U \in \mathsf O(X)$. 
Here, we note that the subdivision preserves the Lipschitz-ness of singular chains. 
So, the map $\mathrm{Sd} : S_\bullet^\Lip \to S_\bullet^\Lip$ is well-defined. 
Further, since the canonical chain homotopy equivalence maps between the identity and the subdivision on $S_\bullet$ also preserve the Lipschitz-ness of singular chains, they give chain homotopy equivalence between $S_\bullet^\Lip$ and $\mathfrak S_\bullet^\Lip$. 
Therefore, we have a natural isomorphism 
\[
%%%\eta_X^\Lip : 
H_\ast^\Lip(X) \cong H_\ast(\mathfrak S_\bullet^\Lip(X))
\]
between their homologies. 

Let us recall that the natural map $[\,\cdot\,] = [\,\cdot\,]_X : S_\bullet^\Lip(X) \to \I_\bullet^\cpt(X)$ for each metric space $X$ was defined in Section \ref{sec:current}.
Obviously, we have $[\mathrm{Sd}\, c] = [c]$ for singular Lipschitz chain $c$. 
Hence, we can define a natural map 
\[
[\,\cdot\,] : \mathfrak S_\bullet^\Lip(X) \to \I_\bullet^\cpt(X). 
\]

\subsection{Local Lipschitz contractibility implying local triviality}
We prove 
%%%In this subsection, let us denote by $H$ one of the precosheaves $H_k$, $\tilde H_k^\Lip$ or $H_k^\IC$ on a metric space $X$ for $k \ge 0$.
\begin{lemma} \label{lem:LLC to LT}
If a metric space $X$ is locally Lipschitz contractible, then it is $H$-locally trivial. 
Here, $H$ is one of the procosheaves $H_k^\sing$, $\tilde H_k^\Lip$, $H_k^\Lip$ and $H_k^\IC$ for $k \ge 0$. 
\end{lemma}
\begin{proof}
Since an LLC metric space $X$ is locally contractible in the usual sense, 
the statement for $H = H_\ast^\sing$ holds. 
For another $H$, the statement follows fom Lemmas \ref{lem:homotopy IC}, \ref{lem:dim axiom} and \ref{lem:homotopy L}. 
\end{proof}

\subsection{Cone inequalities implying $H$-locally triviality}
Riedweg and Sch\"appi introduced the notion of metric spaces satisfying the cone inequalities. 
We translate this notion in terms of precosheaves. 

Let $X$ be a metric space and $\mathfrak C_\bullet = (\mathfrak C_\bullet, \partial)$ a flabby %%%%... 
precosheaf on $X$ of $\cab$-valued. 
Further, $A$ is an $\Ab$-valued precosheaf on $X$ together with a natural transformation $\varepsilon : \mathfrak C_0 \to A$ such that $\varepsilon
 \partial = 0$. 
That is, $\varepsilon : \mathfrak C_\bullet \to A$ is an augmentation. 
The map $\varepsilon$ may be trivial. 
Moreover, we suppose the following. 
\begin{itemize}
\item[(1)] For any $j \ge 0$, %%%$U \in \mathsf O(X)$, 
%%%$c \in \mathfrak C_j(U)$, 
$c \in \mathfrak C_j(X)$, 
there is a unique compact set $K(c)$ %%%contained in $U$ 
such that for every $V \in \mathsf O(X)$ with $K(c) \subset V$, %%%$\subset U$, 
there is an element $c' \in \mathfrak C_j(V)$ such that $i_\# c' = c$. 
Here, $i : V \to U$ is the inclusion and $i_\# = \mathfrak C_\bullet(i)$ denotes the induced map; 
\item[(2)] For any $j \ge 1$, $U \in \mathsf O(X)$, $c \in \mathfrak C_j(U)$, we have $K(\partial c) \subset K(c)$. 
\end{itemize}

%%%Of course, we can consider a trivial augmentation $\epsilon = 0$ for any $\mathfrak C_\bullet$. 
For instance, $S_\bullet$ and $S_\bullet^\Lip$ satisfy these conditions for the canonical augmentations.
In these cases, $K(c)$ is the image of a singular chain $c$.
Further, $\N_\bullet^\cpt$ and $\I_\bullet^\cpt$ also satisfy the conditions, for augmentations $\varepsilon : \N_0^\cpt(X) \to \mathbb R$ and $\varepsilon : \I_0^\cpt(X) \to \mathbb Z$ defined by $\varepsilon(T) = T(1)$. 
In these cases, $K(S)$ is the support of a current $S$. 

\begin{definition}[\cite{RS}] \upshape \label{def:cone ineq}
Let $X, \mathfrak C_\bullet, A, \varepsilon$ be as above.  
We say that $X$ admits {\it the cone inequality for $\mathfrak C_j$} if for any $x \in X$, there exist $r > 0$ and a continuous non-decreasing function $F : [0,\infty) \to [0,\infty)$ with $F(0) = 0$ such that 
\begin{itemize}
\item when $j \ge 1$, for every $c \in \mathfrak C_j(X)$ with $K(c) \subset U(x,r)$ and $\partial c = 0$, there is $c' \in \mathfrak C_{j+1}(X)$ such that $\partial c' = c$ with $\mathrm{diam}\, K(c') \le F(\mathrm{diam}\, K(c))$; 
\item when $j = 0$, for every $c \in \mathfrak C_0(X)$ with $K(c) \subset U(x,r)$ and $\varepsilon c = 0$, there is $c' \in \mathfrak C_1(X)$ such that $\partial c' = c$ with $\mathrm{diam}\, K(c') \le F(\mathrm{diam}\, K(c))$.
\end{itemize}
\end{definition}

\begin{lemma} \label{lem:cone to LT}
If $X$ admits the cone inequality for $\mathfrak C_j$, then the precosheaf $\tilde H_j(\mathfrak C_\bullet)$ is locally trivial. 
Here, $\tilde H_j(\mathfrak C_\bullet)$ denotes the $j$-th homology of the augmented complex $\cdots \xrightarrow{\partial} \mathfrak C_j \xrightarrow{\partial} \mathfrak C_{j-1} \xrightarrow{\partial} \cdots \xrightarrow{\partial} \mathfrak C_0 \xrightarrow{\varepsilon} A$. 
\end{lemma}
\begin{proof}
Let $X$ admit a local cone inequality for $\C_j$. 
Then, for $x \in X$, there exists $r > 0$ and $F : [0,\infty) \to [0,\infty)$ satisfying the condition written above Definition \ref{def:cone ineq}.
%%%We first assume that $F(2 r) > 0$. 
For any $s \in (0,r)$, we choose $s' \in (0,s)$ with 
\[
s' + F(2 s') < s.
\]
Let us take any $c \in \C_j(X)$ with $K(c) \subset U(x,s')$. 
Such a $c$ is considered as an element in $\C_j(U(x,s'))$, because $\C_j$ is flabby and it satisfies the condition (1).
We suppose that $\partial c = 0$ when $j \ge 1$ and that $\varepsilon c = 0$ when $j = 0$. 
Since $s' < s < r$, there exists $c' \in \C_{j+1}(X)$ such that $\partial c' = c$ and 
\[
\mathrm{diam}\, \,K(c') \le F(\mathrm{diam}\, K(c)).
\]
For any $y \in K(c) \subset K(c')$ and $z \in K(c')$, we have 
\[
d(x,z) \le d(x,y) + d(y,z) < s' + F(2 s') < s.
\]
Hence, $K(c') \subset U(x,s)$.
%%%By the condition of $\mathfrak C_\bullet$, there is an element $c'' \in \mathfrak C_{j+1}(U(x,s))$ such that $i_\# c'' = c'$, where $i : U(x,s) \to X$ is the inclusion. 
So, the $c'$ can be regarded as an element in $\C_{j+1}(U(x,s))$. 
Therefore, the morphism
\[
\tilde H_j(\C_\bullet(U(x,s'))) \to \tilde H_j(\C_\bullet(U(x,s)))
\]
is trivial. 
This completes the proof.
\end{proof}

Riedweg and Sch\"appi claimed 
\begin{theorem}[\cite{RS}]\label{thm:RS1}
If a metric space $X$ admits the cone inequalities for $H_k$, $\tilde H_k^\Lip$ and $H_k^\IC$, for all $k$ with $0 \le k \le m$, then the canonical maps 
$H_m(X) \leftarrow H_m^\Lip(X) \to H_m^\IC(X)$ are isomorphisms. 
\end{theorem}
By Lemmas \ref{lem:LLC to LT} and \ref{lem:cone to LT}, our Theorem \ref{thm:cosheaf} is a generalization of Theorem \ref{thm:RS1} in terms of local triviality. 

\subsection{Proof of Theorem \ref{thm:main thm}}
By above preparations, %%%..., 
we immediately get a proof of our main theorem.  
\begin{proof}[Proof of Theorem \ref{thm:main thm}]
Let $X$ be an LLC metric space. 
By Lemma \ref{lem:LLC to LT}, all the precosheaves $H_k^\sing$, $\tilde H_k^\Lip$ and $H_k^\IC$ on $X$ are locally trivial, for every $k \ge 0$. 
By Lemma \ref{lem:0-chains}, the chain map $[\,\cdot\,] : S_0^\Lip(X) \to \I_0^\cpt(X)$ is an isomorphism. 
Due to Subsection \ref{subsec:singular cosheaf}, the functor $\mathfrak S_\bullet^\Lip : \mathsf O(X) \to \cab$ is a flabby cosheaf and $\mathfrak S_0^\Lip = S_0^\Lip$ by the definition. 
Therefore, Theorem \ref{thm:main thm} follows from those things and Lemmas \ref{lem:augmentation} and \ref{lem:I is cosheaf} and Theorem \ref{thm:cosheaf}. %%% and Subsection \ref{subsec:singular cosheaf}.
\end{proof}

\begin{proof}[Proof of Corollary \ref{cor:main cor}]
Let $H$ denote one of $H_\ast^\Lip$ and $H_\ast^\IC$. 
Let us set $\iota : H \to H_\ast^\sing$ the natural isomorphism obtained in Theorem \ref{thm:main thm}. 
Let $X$ and $Y$ be LLC metric spaces, and $f : X \to Y$ a continuous map. 
Then, we define a homomorphism $H(f) : H(X) \to H(Y)$ by $H(f) = \iota_Y^{-1} \circ H_\ast^\sing(f) \circ \iota_X$. 
Further, for another continuous map $g : Y \to Z$ to an LLC metric space $Z$, we obtain 
\begin{align*}
H(g) \circ H(f) 
&= \iota_Z^{-1} \circ H_\ast^\sing(g) \circ \iota_Y \circ \iota_Y^{-1} \circ H_\ast^\sing(f) \circ \iota_X \\
&= \iota_Z^{-1} \circ H_\ast^\sing(g) \circ H_\ast^\sing(f) \circ \iota_X \\
&= \iota_Z^{-1} \circ H_\ast^\sing(g \circ f) \circ \iota_X \\
&= H(g \circ f).
\end{align*}
This shows that $H$ is extended as a functor on the category of LLC metric spaces and continuous maps such that $H$ is naturally isomorphic to the functor $H_\ast^\sing$. 
Further, if $h : X \times [0,1] \to Y$ is a continuous homotopy, then we have 
\[
H(h_0) = \iota_Y^{-1} \circ H_\ast^\sing(h_0) \circ \iota_X = \iota_Y^{-1} \circ H_\ast^\sing(h_1) \circ \iota_X = H(h_1). 
\]
This implies the homotopy invariance of $H$. 
This completes the proof of Corollary \ref{cor:main cor}. 
\end{proof}

\section{Several remarks} \label{sec:remarks}

\subsection{Relative homologies}
A relative version of the singular Lipschitz homology is defined in a similar way to define the relative singular homology. 
For a subset $A$ in a metric space $X$, the inclusion $i : A \to X$ induces an injective morphism $i_\# : S_\bullet^\Lip(A) \to S_\bullet^\Lip(X)$.  
So, we have a chain complex $S_\bullet^\Lip(X,A)$ as the quotient of $S_\bullet^\Lip(X)$ modulo $S_\bullet^\Lip(A)$.
Its homology $H_\ast(S_\bullet^\Lip(X,A))$ is called the relative singular Lipschitz homology and is denoted by $H_\ast^\Lip(X,A)$. 

Let $(X,A)$ be as above. 
The pushforward $i_\# : \I_\bullet^\cpt(A) \to \I_\bullet^\cpt(X)$ is injective by Lemma \ref{lem:coflabby}. 
Hence, we obtain a chian complex $\I_\bullet^\cpt(X,A) = \I_\bullet(X) / i_\# \I_\bullet^\cpt(A)$. 
Its homology $H_\ast(\I_\bullet^\cpt(X,A))$ is called the relative homology of integral currents with compact support and is denoted by $H_\ast^\IC(X,A)$. 

%%%The pair $(X,A)$ is said to be weakly locally Lipscihtz contractible (WLLC in short) if $X$ is WLLC and $A$ is also WLLC, where $A$ is equipped with the restricted metric of $X$. 
Let $(Y,B)$ be another pair of metric spaces. 
A map $f$ from $(X,A)$ to $(Y,B)$ is a map $f : X \to Y$ with $f(A) \subset B$.
We say that a map $f : (X,A) \to (Y,B)$ is (locally) Lipschitz, if $f : X \to Y$ is (locally) Lipschitz. 
Obviously, all pairs of metric spaces and all locally Lipscihtz maps give a category. 
We also have 
\begin{theorem} \label{thm:relative}
On the category of all pairs of LLC metric spaces and all locally Lipscihtz maps, the functors $H_\ast^\sing$, $H_\ast^\Lip$ and $H_\ast^\IC$ are natually ismorphic, where $H_\ast^\sing$ denotes the usual relative singular homology. 

In particular, $H_\ast^\Lip$ and $H_\ast^\IC$ can be extended functors on the category of pairs of LLC spaces and continuous maps. 
\end{theorem}
\begin{proof}
This follows from Theorem \ref{thm:main thm} and the five lemma.
\end{proof}

\subsection{Reduced homology of metric currents}
Let $X$ be a metric space. 
We consider a map 
\[
\varepsilon : \I_\bullet^\cpt(X) \ni T \mapsto T(1) \in \mathbb Z. 
\]
This is an augmentation of the complex $\I_\bullet^\cpt(X)$. 
Actually, for $S \in \I_1^\cpt(X)$, we have 
\[
\varepsilon \partial S = S(1,1) = 0.
\]
So, we obtain the reduced homology of $\I_\bullet^\cpt(X)$ augmented by $\varepsilon$, which is denoted by $\tilde H_\ast^\IC(X)$. 
As Theorem \ref{thm:main thm}, we have 
\begin{theorem}
$\tilde H_\ast^\IC$ is actually a functor on the category of LLC metric spaces and locally Lipschitz maps. 
On that category, the functors $\tilde H_\ast^\sing$, $\tilde H_\ast^\Lip$ and $\tilde H_\ast^\IC$ are naturally isomprphic. 
Here, $\tilde H_\ast^\sing$ denotes the usual reduced singular homology. 

In particular, $\tilde H_\ast^\Lip$ and $\tilde H_\ast^\IC$ can be extended to functors on the category of LLC metric spaces and continuous maps. 
\end{theorem}
\begin{proof}
Let $X_0$ be a set of a single point. 
Then, all the homologies $\tilde H_\ast^\sing(X)$, $\tilde H_\ast^\Lip(X)$ and $\tilde H_\ast^\IC(X)$ are represented as the kernels of $\pi_\ast$ between correspondence non-reduced homologies induced by the canoncial map $\pi : X \to X_0$. 
By the naturality of the non-reduced homologies (Theorem \ref{thm:main thm}), we obtain the conclusion of the theorem. 
\end{proof}

\subsection{A remark on the axiom of finite mass}
\begin{remark}\label{rem:tight} \upshape
Our definition (Definition \ref{def:current}) and the original definition given in \cite{AK} of currents are slightly different. 
The main different point is the finite mass axiom. 
Further, in \cite{AK}, it was supposed that all sets satisfy some set-theoretical axiom about the cardinalities. 
%%% to get the tightnes of the mass measures. 

The original metric currents were defined only on {\it complete} metric spaces assuming the set-theoretical axiom (\cite{AK}). 
The set-theoretical axiom implies that any fnite Borel measure on every complete metric space is automatically tight.
Therefore, the original definition did not impose that the mass measures of currents are tight. 
On the other hands, the LLC-condition is an open property (Proposition \ref{prop:LLC is preserving}). 
Therefore, if we employ the original definition of metric currents, then an area which is applicable to our theory is very small. 
For instance, if a metric space is complete and LLC, then its open set is LLC, but is not complete, in general. 
Further, we want to ignore an additional set-theoretical axiom. 

Fortunately, as mentioned in \cite{AK}, 
if one deal with only metric currents having tight mass measures, then such currents satisfy all the same results obtained there, further, they can be defined on all metric spaces without the set-theoritical axiom. 
%%%currents in the original sense imposed to have tight mass measures defined in arbitrary metric spaces without the special set-theoretical axiom also satisfy all the same results obtained there. %%%as the original currents defined in complete metric spaces with the set theorical axiom. 
This is the reason why we used currents with tight mass measures. %%%and called them just currents. 
\end{remark}

\subsection{Alexandrov spaces revisited}
As mentioned in Subsection \ref{subsec:example}, any finite dimensional Alexandrov space is SLLC. 
The proof of it was based on the theory of gradient flows of distance functions founded by Perelman and Petrunin \cite{PP}, \cite{Pet}. 
Actually, in \cite{MY}, we proved that any point $x$ in an Alexandrov space $X$ has a positive number $r$ such that the distance function $d$ from the metric sphere $S(x,2r)$ centered at $x$ of radius $2 r$ is regular on $U(x,r) \setminus \{x\}$ and further that the absolute gradient $|\nabla d|$ is uniformly bounded on $U(x,r) \setminus \{x\}$. 
Here, $|\nabla d|(y) = \limsup_{z \to y} \frac{|d(z)-d(y)|}{d(z,y)}$. 
Then, the gradient flow of $d$ gives a strong Lipschitz contraction from $U(x,r)$ to $x$. 
On the other hands, extremal subsets of $X$, introduced by Perelman and Petrunin \cite{PP:ext}, have well-behavior in the gradient flows of distance functions. 
Indeed, extremal subsets are characterized by the property that they are preserved under the gradient flow of any distance functions (see \cite{Pet}). 
This fact and the proof of the main result in \cite{MY} implies
\begin{theorem} \label{cor:extremal}
Any extremal subset in an Alexandrov space is strongly locally Lipschitz contractible. 
In particular, the boundary of an Alexandrov space is strongly locally Lipschitz contractible. 
\end{theorem}

Due to Theorem \ref{thm:relative} and Theorem \ref{cor:extremal}, we have 
\begin{corollary} \label{cor:extremal}
Let $X$ be a finite dimensional Alexandrov space and $E$ a its subset. 
Suppose that $E$ belongs to one of three classes of sets in the following: open subsets, discrete subsets, and extremal subsets. 
Then, we have natural isomorphisms $H_\ast(X,E) \cong H_\ast^\Lip(X,E) \cong H_\ast^\IC(X,E)$. 
\end{corollary}
%%%Note that this statement also follows from \cite{Mo}, because $X$ and $E$ have the homotopy types of CW-complicies. 

\subsection{An LLC space not having the homotopy type of CW-complices}
This subsection is devoted to prove %%%provide an example of WLLC metric space which does not have the homotopy type of CW-complices. 
\begin{theorem}[cf.\! \cite{Bo}, \cite{S}] \label{thm:B}
There is an LLC metric space such that it does not have the homotopy type of CW-complices. 
\end{theorem}
%%%Due to such an example, we know that ``the uniqueness of homology theories'' can not be used to prove our theorem. 
Indeed, a space satisfying the topological property written in Theorem \ref{thm:B} was constructed by Borsuk (\cite{Bo}). 
We prove that such a space can admit an LLC metric. 
We also refer Chapter 6 of the book \cite{S} for the construction and recall terminologies used there. 
%%%%.... 

A metrizable space $X$ is called an {\it ANR} ({\it absolute neighborhood retract}) if it is a neighborhood retract of an arbitrary metrizable space that contains $X$ as a closed subset. 
Here, a closed subset $A$ of a space $Y$ is called a neighborhood retract if there exist a neighborhood $U$ of $A$ and a continuous map $r : U \to A$ such that $r |_A = \mathrm{id}_A$. 
For an open covering $\mathcal V$ of a space $X$, we say that two maps $f,g : Y \to X$ from a space $Y$ are $\mathcal V$-{\it close} if for any $y \in Y$, there is a $V \in \mathcal V$ such that $f(y), g(y) \in V$. 
Let $\U$ be an open covering of $X$ such that $\mathcal V$ is a refinement of $\U$. 
We say that $\mathcal V$ is an $h$-{\it refinement} of $\mathcal U$ if any two $\mathcal V$-close continuous maps $f,g : Y \to X$ from a metrizable space $Y$ are $\U$-homotopic. 
%%%
%%%An open refinement $\mathcal V$ of an open covering $\U$ of a metrizable space $Y$ is called an {\it $h$-refinement} if any two continuous maps $f,g : Y \to Z$ to a metrizable space $Z$ such that for any $y \in Y$, there is a $V \in \mathcal V$ satisfying $f(y),g(y) \in V$, are $\mathcal U$-homotopic. %%%%the collection $\{\{f(x), g(x)\} \mid x \in Y\}$ of sets refines the collection $\mathcal V \cup \{\{z\} \mid z \in Z \}$
%%%Here, $f$ and $g$ are $\U$-homotopic if 
Here, $f$ and $g$ are $\U$-homotopic if there is a continuous map $h : X \times [0,1] \to Y$ such that $h_0=f$, $h_1=g$ and that for any $x \in X$, there is a $U \in \U$ such that $h (\{x\} \times [0,1]) \in U$. 
\begin{lemma} \label{lem:ANR}
Let $X$ be a metrizable space. 
If the open cover $\{X\}$ consisting of only $X$ has no $h$-refinement, then $X$ does not have the homotopy type of absolute neighborhood retracts. 
In particular, $X$ does not have the homotopy type of CW-complicies. 
\end{lemma}
\begin{proof}
We suppose that there is an ANR $Y$ such that $X$ and $Y$ are homotopic. 
Let $\phi : X \to Y$ be a homotpy equivalence. 
By Corollary 6.3.5 in \cite{S}, the cover $\{Y\}$ of $Y$ has an $h$-refinement $\mathcal U$.
Let us set $\mathcal V = \{\phi^{-1}(U) \mid U \in \mathcal U\}$. 
Then, $\mathcal V$ is an $h$-refinement of $\{X\}$, which contradicts to the assumption.  
Indeed, we take $\mathcal V$-close maps $f,g : Z \to X$. 
Then, $\phi \circ f$ and $\phi \circ g$ are $\mathcal U$-close. 
Since $\mathcal U$ is an $h$-refienment of $\{Y\}$, there is a homotopy $h : Z \times [0,1] \to Y$ such that $h_0 = \phi \circ f$ and $h_1 = \phi \circ g$. 
Let $\psi : Y \to X$ be a homotopy inverse of $\phi$. 
By using a homotopy $\psi \circ h$ between $\psi \circ \phi \circ f$ and $\psi \circ \phi \circ g$, we obtain a homotopy between $f$ and $g$. 
This completes the proof of the first statement.
Since every CW-complex is an ANR, the latter statement follows. 
\end{proof}

For a metric space $U$ and its subset $U_1$, we say that $U_1$ is a {\it Lipschitz deformation retract of} $U$ if there exists a Lipschitz homotopy $h : U \times [0,1] \to U$ such that $h_0(x)=x$, $h_1(x) \in U_1$ and $h_1(y) = y$ for every $x \in U$ and $y \in U_1$. 
Such a map $h$ is called a {\it Lipschitz deformation retraction from $U$ to $U_1$}. 
%%%In addition, when the deformation $h$ is strong in the sense that $h_t(y) = y$ for every $y \in U_1$ and $t \in [0,1]$, $h$ is called a {\it Lipschitz strong deformation retraction from $U$ to $U_1$} and $U_1$ is called a {\it Lipschitz strong deformation retract of $U$}. 
%%%A %%%strong 
A Lipschitz contraction is a special Lipschitz deformation retraction. 

\begin{lemma} \label{lem:DR}
Let $V$ be a metric space and $V_1$ and $V_0$ its subsets. 
Suppose that $V_0 \subset V_1 \subset V$ and that $V_0$ is a Lipschitz deformation retract of $V_1$ and $V_1$ is a Lipschitz deformation retract of $V$. 
Then, $V_0$ is a Lipschitz deformation retract of $V$. 
%%%
%%%Further, if the both deformations are strong, then $V_0$ is a Lipschitz strong deformation retract of $V$.  
\end{lemma}
\begin{proof}
Let us take Lipschitz deformation retractions $h$ from $V$ to $V_1$ and $g$ from $V_1$ to $V_0$. 
Then, we define a map $k : V \times [0,1] \to V$ by 
\[
k(x,t) = \left\{ 
\begin{aligned}
& h(x,2 t) && \text{if } t \le 1/ 2, \\
& g(h_1(x), 2 t -1) && \text{if } t \ge 1/2.
\end{aligned}
\right.
\]
This map is well-defined. 
Further, it is Lipschitz. 
Indeed, for $x, y \in V$ and for $t,s \in [0,1]$, we have 
\begin{align*}
d(k(x,t),k(y,t)) &\le \max\{\Lip(g),1\} \max\{\Lip(h),1\} d(x,y), \\
d(k(x,t),k(x,s)) &\le 2 \max\{\Lip(h), \Lip(g)\} |s-t|. 
\end{align*}
By the construction, $k$ is a deformation retraction from $V$ to $V_0$ in the usual sense. 
Therefore, $V_0$ is a Lipschitz deformation retract of $V$. 
%%%
%%%Further, we assume that $h$ and $g$ are strong deformation retractions. 
%%%Then, $k(x,t)=x$ holds, for every $x \in V_0$ and $t \in [0,1]$. 
%%%Hence, $k$ is a Lipschitz strong deformation retraction from $V$ to $V_0$. 
This completes the proof of the lemma.
\end{proof}

\begin{proof}[Proof of Theorem \ref{thm:B}]
Let $\ell_2$ denote the standard Hilbert space of coutably inifinite dimension. %%%, that is, $\ell_2 = \{(x_k)_{k=0}^\infty \mid \sum_{k=0}^\infty x_k^2 < \infty \}$.
Let $Q$ be a subset of $\ell_2$ defined by 
\[
Q = \{(x_k)_{k =0}^\infty \in \ell_2 \mid 0 \le x_k \le 2^{-k} \}.
\]
We consider the following spaces. 
\begin{align*}
X_0 &= \{(x_k) \in Q \mid x_0 = 0 \}, \\
C_n &= \{(x_k) \in Q \mid (n+1)^{-1} \le x_0 \le n^{-1}, x_k = 0 \text{ for } k > n \}, \\
X_n &= \pa C_n \text{ (the boundary $n$-sphere of the $(n+1)$-cube $C_n$)},
\end{align*}
where $n \ge 1$. 
Then, we prove that $X = \bigcup_{n=0}^\infty X_n$ is the desired space. 
By the construction, this space is homeomorphic to the space in Theorem 6.3.8 in \cite{S}. 
Therefore, the cover $\{X\}$ has no $h$-refinement. 
By Lemma \ref{lem:ANR}, $X$ does not have the homotopy type of CW-complices.
 
We prove that $X$ is LLC. 
Since an open set $\bigcup_{n=1}^\infty X_n = X \setminus X_0$ in $X$ is a locally finite simplical complex, it is SLLC. 
Let $x \in X_0$. 
We denote by $p_m : X \to \prod_{k=0}^m [0,2^{-k}]$ the projection into the first $(m+1)$-coordinates. 
For any $r > 0$, there exist an $m \ge 1$ and a convex neighborhood $W$ of $(x_1, \dots, x_m)$ in $\prod_{k=1}^m [0,2^{-k}]$ such that $p_m^{-1}([0,m^{-1}] \times W) \subset U(x,r) \cap X$, where $U(x,r)$ is the open ball in $\ell_2$ centered at $x$ of radius $r$.  
Indeed, for $y \in p_m^{-1}([0,m^{-1}] \times W)$, we have 
\[
\|x-y\|_{\ell_2}^2 \le m^{-2} + \sum_{k=1}^m |x_k - y_k|^2 + \sum_{k>m} 4^{-k}.
\]
Hence, we can have such an $m$ and a $W$. 
Then, we define a neighborhood $V$ of $x$ in $X$ by 
\[
V = \left\{
\begin{aligned}
& p_{m+1}^{-1}\left([0,m^{-1}] \times W \times [0,2^{-(m+1)})\right) && \text{if } x_{m+1} \le 4^{-(m+1)}, \\
& p_{m+1}^{-1}\left([0,m^{-1}] \times W \times (0,2^{-(m+1)}]\right) && \text{if } x_{m+1} > 4^{-(m+1)}. 
\end{aligned}
\right. 
\]
Further, we consider the following sets. 
\begin{align*}
V_0 &= \left\{
\begin{aligned}
& p_{m+1}^{-1}(\{0\} \times W \times \{0\}) &&\text{if } x_{m+1} \le 4^{-(m+1)}, \\
& p_{m+1}^{-1}(\{0\} \times W \times \{2^{-(m+1)}\}) && \text{if } x_{m+1} > 4^{-(m+1)}, 
\end{aligned} 
\right. 
\\
V_1 &= \left\{ 
\begin{aligned}
& p_{m+1}^{-1}([0,m^{-1}] \times W \times \{0\}) &&\text{if } x_{m+1} \le 4^{-(m+1)}, \\
& p_{m+1}^{-1}([0,m^{-1}] \times W \times \{2^{-(m+1)}\}) && \text{if } x_{m+1} > 4^{-(m+1)}.
\end{aligned}
\right.
\end{align*}
Note that all the sets $V$, $V_1$ and $V_0$ are contained in $U(x,r) \cap X$. 
As written in \cite{S}, $V_1$ is a strong deformation retract of $V$ by a deformation $h : V \times [0,1] \to V$ sliding along the $(m+2)$-th coordiante, and $V_0$ is a strong deformation retract of $V_1$ by a deformation $g : V_1 \times [0,1] \to V_1$ sliding along the first coordinate. 
From the constructions, $h$ and $g$ are Lipschitz homotopies. 
Further, $V_0$ %%%%$= \{0\} \times W \times \{0\} \times \prod_{k>m+1} [0,2^{-k}]$ 
is Lipschitz contractible in itself. 
Hence, $V$ is Lipschitz contractible in $V$ to some point, by Lemma \ref{lem:DR}. 
Therefore, we conclude that $X$ is WLLC. 
By Lemma \ref{lem:WLLC to LLC}, $X$ is LLC. 
This completes the proof of Theorem \ref{thm:B}. 
\end{proof}

%%%$V_0 = p_{m+1}^{-1}(\{0\} \times W \times \{x_{m+1}\})$でも ok ....????

%%%\begin{remark}\upshape 
%%%Since, in the above proof, the set $V_0$ does not contain the given point $x$ if $0<x_{m+1}<2^{-(m+1)}$, 
%%%the Lipschitz contraction from $V$ in $U(x,r) \cap X$ via the contraction of $V_0$ guarantees that $X$ is WLLC. 
%%%That is, $X$ is not proved to be LLC. 
%%%\end{remark}

%%%The category $\mathsf{Met}^{(2)}$ of all pairs of metric spaces and all locally Lipschitz maps is an admissible category for homology theory. 
%%%
%%%However, we check the homology theory $H_\ast^\IC$ satisfies the additivity axiom, that is, 
%%%\begin{lemma}
%%%Let a metric space $X$ be represented by the disjoint union of open subsets $X_\alpha$. 
%%%Then, we have an isomorphism
%%%\[
%%%(i_\alpha{}_\#) : H_\ast^\IC(X_\alpha) \to H_\ast^\IC(X). 
%%%\]
%%%Here, $i_\alpha : X_\alpha \to X$ are the inclusions. 
%%%\end{lemma}

\subsection{Remark on the homology of normal currents}
By Lemma \ref{lem:I is cosheaf}, the functor $\N_\bullet^\cpt : \mathsf O(X) \to \cab$ is known to be a flabby cosheaf on each metric space $X$. 
Hence, we may apply Theorem \ref{thm:cosheaf} to this cosheaf $\N_\bullet^\cpt$. 
Now, we note that the space $\N_0^\cpt(X)$ is identified with the space $\mathcal M(X)$ of all finite signed Borel measures on $X$ with compact support. 
Therefore, when $X$ is LLC, we can guess that the homology $H_\ast(\N_\bullet^\cpt(X))$ coincides with the homology of some chain complex $\mathbf C_\bullet(X)$ such that its $0$-th group $\mathbf C_0(X)$ is $\mathcal M(X)$. 
Further, the homology $H_\ast(\mathbf C_\bullet(X))$ should be a topological invariant. 
Such a chain complex actually exists, called the measure chian complex, introduced by Thurston \cite{T}. 
However, there is no canonical map between $\mathbf C_\bullet(X)$ and $\N_\bullet^\cpt(X)$. 
We discuss such a difficult point in another paper. 

\subsection{Localizations}
\begin{lemma} \label{lem:snake}
%%%%Let $U$ be a metric space such that the natural maps $H_m^\Lip(U) \to H_m^\sing(U)$ is surjective and $\tilde H_{m-1}^\Lip(U) \to \tilde H_{m-1}^\sing(U)$ is injective. 
%%%%Then, for any $c \in \mathfrak S_m(U)$ with $\partial c \in \mathfrak S_{m-1}^\Lip(U)$, there are $c^L \in \mathfrak S_m^\Lip(U)$ and $\bar c \in \mathfrak S_{m+1}(U)$ such that $\partial \bar c = c + c^L$.	
Let $A_\bullet$ and $B_\bullet$ be chian complices of indexed by integers and $f : A_\bullet \to B_\bullet$ a chain map. 
Suppose that $H_m(f) : H_m(A_\bullet) \to H_m(B_\bullet)$ is surjective and $H_{m-1}(f) : H_{m-1}(A_\bullet) \to H_{m-1}(B_\bullet)$ is injecitve. 
Then, for any $b \in B_m$ and $a \in A_{m-1}$ with $\partial b = f(a)$ and $\partial a = 0$, there are $\bar a \in A_m$ and $\bar b \in B_{m+1}$ such that $\partial \bar b = b+f(\bar a)$. 
\end{lemma}
\begin{proof}
%%%%%By the snake lemma, we obtain an exact sequene 
%%%%%\[
%%%%%H_m^\Lip(U) \to H_m^\sing(U) \to H_m(\mathfrak S_\bullet(U) / \mathfrak S_\bullet^\Lip(U)) \to \tilde H_{m-1}^\Lip(U) \to \tilde H_{m-1}^\sing(U).
%%%%%\]
%%%%%Therefore, by the assumption, we have $H_m(\mathfrak S_\bullet(U) / \mathfrak S_\bullet^\Lip(U)) = 0$. 
%%%%%Let us take $c \in \mathfrak S_m(U)$ with $\partial c \in \mathfrak S_{m-1}^\Lip(U)$. 
%%%%%Then, the element $c$ modulo $\mathfrak S_m^\Lip(U)$ has the zero boundary modulo $\mathfrak S_{m-1}^\Lip(U)$. 
%%%%%Since $H_{m}(\mathfrak S_\bullet(U) / \mathfrak S_\bullet^\Lip(U))=0$, there is $\bar c \in \mathfrak S_{m+1}(U)$ such that $\partial \bar c$ coincides with $c$ modulo $\mathfrak S_m^\Lip(U)$. 
%%%%%Therefore, there is $c^L \in \mathfrak S_m^\Lip(U)$ such that $\partial \bar c = c + c^L$. 
%%%%%This completes the proof.
Let us take $b \in B_m$ and $a \in A_{m-1}$ with $\partial b = f(a)$ and $\partial a = 0$. 
Since $H_{m-1}(f)$ is injective, there is $\bar a \in A_m$ such that $\partial \bar a = a$. 
Then, we have $\partial (b- f(\bar a))=0$. 
Since $H_m(f)$ is surjective, there exist $\bar {\bar a} \in A_m$ and $\bar b \in B_{m+1}$ such that $\partial \bar {\bar a}=0$ and $f(\bar {\bar a}) + f(\bar a) = b + \partial \bar b$.
This completes the proof.
\end{proof}

As a corollary to the proof of Theorem \ref{thm:cosheaf}, we have 
\begin{corollary} \label{cor:localization}
Let $m \ge 1$ and $\epsilon > 0$. 
Let $X$ be a metric space which is $\tilde H_j^\Lip$-locally trivial for $0 \le j \le m-1$. 
\begin{itemize}
\item[(1)] If $X$ is $H_k^\sing$-locally trivial for $1 \le k \le m$, then for $c \in S_m(X)$ with $\partial c \in S_{m-1}^\Lip(X)$, there exist finitely many elements $c_1, \dots, c_N \in S_m(X)$, $c_1^L, \dots, c_N^L \in S_m^\Lip(X)$ and $\bar c_1, \dots, \bar c_N \in S_{m+1}(X)$ such that 
\begin{itemize}
\item[(1-a)] there is $n \ge 0$ such that $\sum_i c_i = \mathrm{sd}^n (c)$ and $\partial \bar c_i = c_i - c_i^L$; 
\item[(2-a)] $\mathrm{im}(\bar c_i) \subset U(\mathrm{im}(c), \epsilon)$ and $\mathrm{diam}\, \mathrm{im}(\bar c_i) < \epsilon$.
\end{itemize}
\item[(2)] If $X$ is $H_k^\IC$-locally trivial for $1 \le k \le m$, then for $T \in \I_m^\cpt(X)$ and for $c \in S_{m-1}^\Lip(X)$ satisfying $\partial T = [c]$ and $\partial c = 0$, there exist finitely many elements $T_1, \dots, T_N \in \I_m^\cpt(X)$, $c_1^L, \dots, c_N^L \in S_m^\Lip(X)$ and $S_1, \dots, S_N \in \I_{m+1}^\cpt(X)$ such that 
\begin{itemize}
\item[(2-a)] $\sum_i T_i = T$ and $\partial S_i = T_i - [c_i^L]$; 
\item[(2-b)] there is $n \ge 0$ such that $\sum_i c_i^L = \mathrm{sd}^n c$; 
\item[(2-c)] $\mathrm{spt}(S_i) \cup \mathrm{im}(c_i^L) \subset U(\mathrm{spt}(T) \cup \mathrm{im}(c), \epsilon)$ and $\mathrm{diam}\, \mathrm{spt}(S_i) < \epsilon$.
\end{itemize}
\end{itemize}
\end{corollary}
\begin{proof}
Let $X$ be as in the assumption. 
Let us prove (1). 
We suppose that $X$ is $H_k^\sing$-locally trivial for $1 \le k \le m$ and take $c \in S_m(X)$ with $\partial c \in S_{m-1}^\Lip(X)$. 
%%%By Theorem \ref{thm:cosheaf}, the map $[\,\cdot\,]_\ast : H_m^\Lip(U) \to H_m^\sing(U)$ is isomorphic for every open set $U$ in $X$.
Let us take a finite open covering $\mathcal U = \{U_i\}_{i=1}^N$ of $\mathrm{im}(c)$ such that $\mathrm{diam}(U_i) < \epsilon/2$.
We set $U = \bigcup_{i=1}^N U_i$. 
Then, we may regard $c$ as an element in $c \in S_m(U)$. 
Further, the same symbol $c$ denotes the element in $\mathfrak S_m(U)$ represented by $c$. 
Since $\partial c \in \mathfrak S_{m-1}^\Lip(U)$, by Lemma \ref{lem:double}, %%%% and the map $H_{m-1}^\Lip(U) \to H_{m-1}^\sing(U)$ is an isomorphic, there is an element $...$
there are elements $c_{k, m-k-1}^L \in \check C_k(\mathcal U, \mathfrak S_{m-k-1}^\Lip)$ such that 
\[
\varepsilon c_{0,m-1}^L = \partial c \text{ and } 
\Phi(c_{k,m-1}^L) = \partial c_{k-1,m-k}^L
\]
for $1 \le k \le m-1$. 
On the other hands, since $\varepsilon : \check C_0(\U, \mathfrak S_m) \to \mathfrak S_m(U)$ is surjective, there is $c_{0,m} \in \check C_0(\U,\mathfrak S_m)$ such that $\varepsilon c_{0,m} = c$. 
By the choice, we have 
\[
\varepsilon (\partial c_{0,m}-c_{0,m-1}^L) = 0.
\] 
Therefore, we obtain $c_{1,m-1} \in \check C_1(\U,\mathfrak S_{m-1})$ satisfying 
\[
\Phi c_{1,m-1} = \partial c_{0,m} - c_{0,m-1}^L. 
\]
By repeating such an argument, we have a sequence of elements $c_{k,m-k} \in \check C_k(\U,\mathfrak S_{m-k})$ such that 
\[
\Phi c_{k,m-k} = \partial c_{k-1,m-k+1} + (-1)^k c_{k-1, m-k}^L
\]
fro all $1 \le k \le m$.
Then, since $\mathfrak S_0 = \mathfrak S_0^\Lip$, the element $c_{m,0}$ belongs to $\check C_m(\U,\mathfrak S_0^\Lip)$.
Further, we have $\partial c_{m-1,1} \in \check C_{m-1}(\U,\mathfrak S_0^\Lip)$.
Since $H_1^\sing \cong H_1^\Lip$, by Lemma \ref{lem:snake}, there are elements $c_{m-1,1}^L \in \check C_{m-1}(\U,\mathfrak S_1^\Lip)$ and $c_{m-1.2} \in \check C_{m-1}(\U,\mathfrak S_2)$ such that 
\[
\partial c_{m-1,2} = c_{m-1,1} - c_{m-1,1}^L.
\]
Hence, we obtain 
\[
\partial (\Phi c_{m-1,2} - c_{m-2,2}) = (-1)^{m-1} c_{m-2,1}^L - \Phi c_{m-1,1}^L 
\]
which is an element of $\check C_{m-2}(\U, \mathfrak S_1^\Lip)$.
By Lemma \ref{lem:snake}, there are elements 
$c_{m-2,3} \in \check C_{m-2}(\U,\mathfrak S_3)$ and 
$c_{m-2,2}^L \in \check C_{m-2}(\U,\mathfrak S_2)$ such that 
\[
\partial c_{m-2,3} = \Phi c_{m-1,2} - c_{m-2,2} - c_{m-2,2}^L.
\]
By using Lemma \ref{lem:snake} repeatedly, 
%%%%repeating such an argument and using $H_k^\sing \cong H_k^\Lip$, 
we have sequences of elements $c_{m-k,k+1} \in \check C_{m-k}(\U,\mathfrak S_{k+1})$ and $c_{m-k,k}^L \in \check C_{m-k}(\U,\mathfrak S_k^\Lip)$ such that 
\begin{align*}
&\partial c_{m-k,k+1} = \Phi c_{m-k+1,k} + (-1)^{k+1} c_{m-k,k} - c_{m-k,k}^L %%%, \\
%%%&\partial (\Phi c_{m-k,k+1} +(-1)^k c_{m-k-1,k+1}) = (-1)^{m-1} c_{m-k-1,k}^L - \Phi c_{m-k,k}^L 
\end{align*}
for $2 \le k \le m$.
Let us set 
\[
\tilde c_{0,m} :=  (-1)^{m+1} \Phi c_{1,m} + c_{0,m} \in \check C_0(\U, \mathfrak S_m).
\]
Then, we have
\begin{equation} \label{eq:conc}
\left\{
\begin{aligned}
& \varepsilon \tilde c_{0,m} = c, \\
& \partial c_{0,m+1} = (-1)^{m+1} \tilde c_{0,m} - c_{0,m}^L.
\end{aligned}
\right.
\end{equation} 
Further, let us set $\tilde c_{0,m} = (c_i)_{i=1}^N$, $(-1)^{m+1} c_{0,m+1} = (\bar c_i)_{i=1}^N$ and $(-1)^{m+1} c_{0,m}^L = (c_i^L)_{i=1}^N$, where $c_i \in \mathfrak S_m(U_i)$, $\bar c_i \in \mathfrak S_{m+1}(U_i)$ and $c_i^L \in \mathfrak S_m^\Lip(U_i)$. 
Then, the relation \eqref{eq:conc} is translated as 
\[
\begin{aligned}
& \sum_{i=1}^N c_i = c \in \mathfrak S_m(U), \\
& \partial \bar c_i = c_i - c_i^L \in \mathfrak S_m(U_i)
\end{aligned}
\]
for every $i \in \{1, \dots, N\}$. 
We consider representatives of $c_i$ in $S_m(U_i)$, $c_i^L$ in $S_m^\Lip(U_i)$ and $\bar c_i$ in $S_{m+1}(U_i)$. %%% are denoted by the same symbols $c_i \in S_m(U_i)$, $c_i^L \in S_m^\Lip(U_i)$ and $\bar c_i \in S_{m+1}(U_i)$. 
%%%Then, by taking subdivision of them sufficiently many times, we obtain the conclusions of the statement (1).
Taking subdivision of them sufficiently many times, we obtain the conclusions of the statement (1).

The statement (2) can be proved by an argument similar to the proof of (1).
This completes the proof of Theorem \ref{cor:localization}
\end{proof}

Remark that Corollary \ref{cor:localization} is a generalization of statements in \cite{RS} in terms of local triviality of homology theories. 

\vspace{10pt}
\noindent{\bf Acknowledgements}.
The author would like to express my appreciation to Takumi Yokota for helpful comments, discussions, and careful reading of a preliminary version of the manuscript.
He is also grateful to Shouhei Honda, Yu Kitabeppu, Takashi Shioya, and Takao Yamaguchi for helpful comments and for discussions.
%%%He is supported by Research Fellowships of the Japan
%%%Society for the Promotion of Science for Young Scientists.

\end{document}